\newcommand{\kernel}{k}
\newcommand{\kt}{k(t,x,y)}
\renewcommand{\(}{\left(}
\renewcommand{\)}{\right)}
\renewcommand{\[}{\left[}
\renewcommand{\]}{\right]}
\newcommand{\R}{\mathbf{R}}
\newcommand{\pr}{\mathbf{P}}
\newcommand{\ex}{\mathbf{E}}
\newcommand{\E}{\mathbf{E}}
\newcommand{\bs}{\backslash}
\theoremstyle{plain}
\newtheorem{theorem}{Theorem}
\newtheorem{lemma}{Lemma}
\newtheorem{corollary}{Corollary}
\newtheorem{proposition}{Proposition}
\theoremstyle{definition}
\theoremstyle{remark}
\title {Feeling boundary by Brownian motion in a ball
\footnotetext{2000 MS Classification:
    35K08, 60J65. {\it Key words and phrases}: heat kernel,  ball, asymptotics, Laplacian, Brownian motion} 
    \footnotetext{ The author was supported partly by the National
Science Centre grant no. 2015/18/E/ST1/00239 and partly by the Faculty of Pure and Applied Mathematics of Wroc{\l}aw University of Science
and Technology (0402/0051/18).
       }
\author{ G. Serafin\\ Faculty of Pure and Applied Mathematics\\ Wroc{\l}aw University of Science and Technology
}}
\date{}
\begin{document}
\maketitle

\begin{abstract}
We establish short-time asymptotics with rates of convergence for the Laplace Dirichlet  heat kernel in a ball. So far, such results were only known in simple cases where explicit formulae are available, i.e., for sets as half-line, interval and their products.  Presented asymptotics may be considered as a complement  or a generalization of the famous "principle of not feeling the boundary" in case of a ball.  Following the metaphor, the principle reveals when the process does not feel the boundary, while we describe what happens when it starts feeling the boundary.
\end{abstract}

\section{Introduction}
\label{sec:introduction}
Let  $k(t,x,y)=(4\pi t)^{-n/2}e^{-|x-y|^2/4t}$, $n\geq2$, be the  global heat kernel for the Laplacian in $\R^n$. For an open domain $D\subset \R^n$ we denote by $k_D(t,x,y)$ the Dirichlet heat kernel for the  Laplacian in $D$. Z. Ciesielski has proven in \cite{Ciesielski:1966} that if the interval 
$I(x,y)=\{z=\alpha x+(1-\alpha)y: \alpha\in[0,1]\}$
connecting $x$ and $y$,  is contained in $D$, then the heat kernel $k_D(t,x,y)$ satisfies the well known ''principle of not feeling the boundary'' (introduced by M. Kac in \cite{Kac:1951})
\begin{align}\label{eq:nfb}\lim_{t\rightarrow0}\frac{k_D(t,x,y)}{k(t,x,y)}=1.\end{align}
In \cite{vdB}, M. van den Berg improved it by providing the following rate of convergence
\begin{align}\label{eq:vdB}
k(t,x,y)\geq k_D(t,x,y)\geq k(t,x,y)\(1-e^{-\rho^2/t}\sum_{k=1}^n\frac{2^k}{(k-1)!}\(\frac{\rho^2}{t}\)^{k-1}\),
\end{align}
where $\rho$ is the distance between $I(x,y)$ and the boundary $\partial D$ of the domain $D$, i.e.
$$\rho=\inf_{\substack{w\in I(x,y)\\ z\in \partial D}}|w-z|.$$
 This kind of short-time asymptotic behaviour of the Dirichlet heat kernels has been studied and generalized in many papers, see e.g. \cite{ H2, Berg:1990, Varadhan:1967}. However, there is still no answer to a very natural question:  what is the limit in \eqref{eq:nfb} if $x$ or $y$ is getting close to the boundary of $D$, or following the metaphorical convention: what happens when the process starts feeling the boundary? The answer in known only in few elementary cases, e.g. for  a half-space or an interval, where simple explicit formulas of heat kernels are available.

Research on short-time boundary behaviour of Dirichlet heat kernels has a long history, but  concerns mainly estimates, and not asymptotics (see, among others,  \cite{CheegerYau:1981, Davies:1987, GrigorSaloff:2002, MR2807275, MS,  Zhang:2002, Zhang:2003, U}). In particular, let us recall very general bounds of E. B. Davies \cite{Davies:1987} (the upper bound) and  \mbox{Q. S. Zhang} \cite{Zhang:2002} (the lower bound),  which state that for any bounded domain $D\subset \R^n$ there are constants $c_1,c_2,c_3,c_4>0$ such that 

\begin{eqnarray*}
  \label{eq:Davies:est}
  c_1\left(\frac{\delta_D(x)\delta_D(y)}{t}\wedge 1\right)k(c_2t,x,y)  \leq k_D(t,x,y) \leq c_3\left(\frac{\delta_D(x)\delta_D(y)}{t}\wedge 1\right)k(c_4t,x,y).
\end{eqnarray*}
for every $x,y\in D$ and $t<T$ for some $T>0$. Here, $\delta_D(x)$ denotes the distance of $x$ to the boundary of $D$. The result is very powerful, but also very imprecise from the point of view of asymptotics. Its main disadvantage is that the time variable is multiplied by different constants in lower and upper bounds, which means that the exponential behaviours differ significantly for large values of $|x-y|^2/t$, and consequently both bounds become  completely incomparable. This inaccuracy has been recently removed in \cite{MS} in case of a ball. Precisely, let $B=B(0,1)$ be a unit ball centered at the origin and denote by $k_B(t,x,y)$ the heat kernel of $B$. Then,    for every  $T>0$ there exists a constant $C=C(n,T)>1$  such that 
\begin{eqnarray}
\label{eq:MS}
\frac{1}{C}\,  h(t,x,y)k(t,x,y)\leq k_B(t,x,y)\leq C\, h(t,x,y)k(t,x,y)
\end{eqnarray}
for every $x,y\in B$ and $t<T$, where 
\begin{eqnarray}
\label{eq:htxy:est}
  h(t,x,y) = \left(1\wedge\frac{\delta_B(x)\delta_B(y)}t\right)+\left(1\wedge\frac{\delta_B(x)|x-y|^2}t\right)\left(1\wedge\frac{\delta_B(y)|x-y|^2}t\right)\/.
\end{eqnarray}
 This estimate is a step forward and gives us new information about the behaviour of the heat kernel near the boundary. Nevertheless, it still does not enable us to obtain precise asymptotics of the quotient $k_B(t,x,y)/k(t,x,y)$. See also \cite{B, BM, MSZ, NSS1, NSS2, S, U} for other recent research on accurate exponential behaviour of heat kernels and densities of joint distribution of first hitting time and place.

The goal of the paper is to derive uniform short-time asymptotics  of the heat kernel $k_B(t,x,y)$ of the ball $B(0,1)$ as well as  to provide rates of convergence. Note that long-time asymptotics follow from the general theory (see \cite{Davies:1987}, \cite{DaviesSimon:1984}) or the series representation given in \cite H. For small times, even though the estimates \eqref{eq:MS} are known, it is not clear what is the main factor impacting on asymptotics. One can see that when $|x-y|^2/\sqrt t$ is small then the left-hand side component in \eqref{eq:htxy:est} dominates the other one, but when $|x-y|^2/\sqrt t$ is large then the right-hand side component is not the dominating one  in some range of arguments (e.g. for $\delta_B(y)<|x-y|^2<\delta_B(x)$). It turns out that the proper quantity that drives the short-time behaviour of the heat kernel of a ball is $\delta_B\(\frac{x+y}2\)/\sqrt t$, which is comparable to $(|x-y|^2+\delta_B(x)+\delta_B(y))/\sqrt t$, see \eqref{eq:parallel}. The main results of the paper are  Theorems \ref{thm:Thm1} and \ref{thm:Thm2}, where asymptotics of the heat kernel as $\delta_B\(\frac{x+y}2\)/\sqrt t$ tends to infinity or zero, respectively, are presented.
\begin{theorem}\label{thm:Thm1}
There are constants $C,M>0$ depending only on $n$ such that for $\frac{\delta_B\(\frac{x+y}2\)}{\sqrt t}>M$ we have 
\begin{align*}
&\Bigg|k_B(t,x,y) -\left(1-e^{-2\delta_{H_x}(x)\,\delta_{H_x}\left(\frac{x+y}2\right)/t}\right)\left(1-e^{-2\delta_{H_y}(y)\,\delta_{H_y}\left(\frac{x+y}2\right)/t}\right)k(t,x,y)\Bigg|\\
&\leq C \sqrt{\frac{\sqrt t}{\delta_B\(\frac{x+y}2\)}}k_B(t,x,y),
\end{align*}
where $\delta_{H_z}(w)$, $w,z\in B(0,1)$,  denotes the distance between $w\in B(0,1)$ and the hyperplane tangent to the ball at the point $z/|z|$. In particular, it holds $\delta_{H_z}(z)=\delta_B(z)$.
\end{theorem}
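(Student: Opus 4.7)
The plan is to establish the factorised approximation by Chapman--Kolmogorov at the mid-time $t/2$, replacing the ball by the tangent half-space $H_x$ (respectively $H_y$) in the first (respectively second) half. Starting from
\[
k_B(t,x,y)\;=\;\int_B k_B\!\left(\tfrac t2,x,z\right)k_B\!\left(\tfrac t2,z,y\right)dz
\]
and introducing the bridge survival probability $P^{s}_{u\to v}(D)=k_D(s,u,v)/k(s,u,v)$, one rewrites this as
\[
\frac{k_B(t,x,y)}{k(t,x,y)}=\int_B p(z)\,P^{t/2}_{x\to z}(B)\,P^{t/2}_{z\to y}(B)\,dz,
\]
where $p(z)=k(t/2,x,z)k(t/2,z,y)/k(t,x,y)$ is the law of the midpoint of a free Brownian bridge from $x$ to $y$, i.e.\ a Gaussian centred at $(x+y)/2$ of standard deviation $\sqrt t/2$.

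The next step replaces each factor $P^{t/2}_{x\to z}(B)$ by its half-space counterpart $P^{t/2}_{x\to z}(H_x)=1-e^{-2\delta_{H_x}(x)\delta_{H_x}(z)/t}$ (obtained by the reflection formula), and likewise for the second factor with $H_y$. The upper bound follows from the inclusion $B\subset H_x$. The matching lower bound is the main technical point: one truncates $z$ to a tube around the segment $[x,(x+y)/2]$ of radius comparable to $\sqrt{t\log(\delta_B(\tfrac{x+y}{2})/\sqrt t)}$, on which the ball $B$ and the half-space $H_x$ differ only by a spherical cap of height $O(t/\delta_B(\tfrac{x+y}{2}))$ situated at distance at least $c\sqrt t$ from the segment; the probability that the bridge reaches that cap is then controlled by van den Berg's principle of not feeling the boundary~\eqref{eq:vdB}.

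Finally, since $z\mapsto\delta_{H_x}(z)$ and $z\mapsto\delta_{H_y}(z)$ are affine and the density $p$ is Gaussian of width $\sqrt t\ll\delta_B(\tfrac{x+y}{2})$, the integrand can be replaced by its value at $z=(x+y)/2$, so that the integral collapses to
\[
\bigl(1-e^{-2\delta_{H_x}(x)\delta_{H_x}(\tfrac{x+y}{2})/t}\bigr)\bigl(1-e^{-2\delta_{H_y}(y)\delta_{H_y}(\tfrac{x+y}{2})/t}\bigr)\,k(t,x,y),
\]
which is exactly the leading term in the theorem.

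The main difficulty is the sharp matching lower bound in the tangent half-space approximation. Balancing the truncation radius $\sqrt{t\log(\delta_B(\tfrac{x+y}{2})/\sqrt t)}$ against the spherical cap height $t/\delta_B(\tfrac{x+y}{2})$ and the Gaussian concentration of the midpoint density $p$, and controlling the exponential factors $e^{-2\delta_{H_x}(x)\delta_{H_x}(z)/t}$ uniformly in $z$ across the tube, is what ultimately produces the non-obvious rate $\sqrt{\sqrt t/\delta_B(\tfrac{x+y}{2})}$ of order $t^{1/4}$ in place of the more naive $\sqrt t/\delta_B(\tfrac{x+y}{2})$ rate.
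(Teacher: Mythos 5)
Your high-level plan matches the paper's: Chapman--Kolmogorov at mid-time $t/2$, replacement of $k_B(t/2,x,\cdot)$ by $k_{H_x}(t/2,x,\cdot)$ and $k_B(t/2,\cdot,y)$ by $k_{H_y}(t/2,\cdot,y)$, localization of the midpoint to a region where $z\mapsto\delta_{H_x}(z)$ and $z\mapsto\delta_{H_y}(z)$ can be frozen at $z=\tfrac{x+y}{2}$, and Gaussian tail control outside. The paper in fact localizes to a ball of radius $R=\sqrt{\delta(\tfrac{x+y}{2})\sqrt t}$ (rather than a logarithmic tube), and it is this $R$ that produces the rate $\sqrt{\sqrt t/\delta(\tfrac{x+y}{2})}$ through the Lipschitz error $R/\delta_{H_x}(\tfrac{x+y}{2})$.

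The genuine gap is in the half-space replacement, which you propose to control with van den Berg's inequality~\eqref{eq:vdB}. That inequality compares $k_B$ only with the \emph{free} kernel $k$, and becomes vacuous as soon as $\rho=\mathrm{dist}(I(x,z),\partial B)\lesssim\sqrt t$. But the whole point of Theorem~\ref{thm:Thm1} is the regime where $x$ or $y$ may be very close to $\partial B$ (only $\delta(\tfrac{x+y}{2})/\sqrt t$ is assumed large, not $\delta(x)/\sqrt t$ or $\delta(y)/\sqrt t$). In that regime $\rho\approx\delta(x)$ can be arbitrarily small and $k_B(t/2,x,z),\,k_{H_x}(t/2,x,z)\ll k(t/2,x,z)$, so van den Berg gives nothing. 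What you actually need is the multiplicative bound $|k_B(t/2,x,z)-k_{H_x}(t/2,x,z)|\lesssim (t/\delta(z)^2)\,k_B(t/2,x,z)$, uniform in how small $\delta(x)$ is, provided $\delta(z)/\sqrt t$ is large. This is precisely Lemma~\ref{lem:1.1} of the paper, and its proof requires an extra Chapman--Kolmogorov step at the small scale $t/32$, localization to $B_n(\tfrac{1}{32}y+\tfrac{31}{32}x,\tfrac{1}{64}\delta(y))$, the sharp estimate~\eqref{eq:k-k} from~\cite{MS} near the tangent point, and Corollary~\ref{cor:CKHH} for the Gaussian tails; none of this reduces to van den Berg's principle. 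As stated, your "spherical cap reached with small probability" step is where all the work hides, and the mechanism you invoke does not close it.
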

Let us note that the factors appearing in the asymptotical form come from the form of the heat kernel of a half-space. Precisely, we can write
$$\left(1-e^{-2\delta_{H_x}(x)\,\delta_{H_x}\left(\frac{x+y}2\right)/t}\right)=\frac{k_{H_x}\(t/2,x,\frac{x+y}2\)}{k\(t/2,x,\frac{x+y}2\)}.$$
One may therefore interpret this in the following way: if the distance from  the middle point between $x$ and $y$ to the boundary is  much bigger than $\sqrt t$, then the process traveling from $x$ to $y$ is, before reaching neighbourhood of  the middle point, similar to the process in the half-space $H_x$, and to the process in $H_y$ after reaching the neighbourhood of the midpoint. 
\begin{theorem}\label{thm:Thm2}
There are constants $C,m_1,m_2>0$ depending only on $n$ such that for $t<m_1$ and $\frac{\delta_B\(\frac{x+y}2\)}{\sqrt t}<m_2$ we have
$$\left|k_B(t,x,y)-\frac{\delta_B(x)\delta_B(y)}tk(t,x,y)\right|\leq C  \(\sqrt t+\sqrt{\frac{\delta_B\(\frac{x+y}{2}\)}{\sqrt t}}\)k_{B}(t,x,y),\ \ \ \ x,y\in B.$$
\end{theorem}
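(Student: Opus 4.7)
The plan is to compare $k_B$ with the heat kernel $k_H$ of a half-space $H$ tangent to $B$ at the radial projection of $(x+y)/2$, to extract the leading term $\delta_B(x)\delta_B(y)\,k(t,x,y)/t$ from a first-order expansion of the explicit half-space formula, and to bound all correction terms by the stated error.

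Let $\xi:=(x+y)/|x+y|\in\partial B$ and let $H=H_\xi$ be the half-space tangent to $\partial B$ at $\xi$ with $B\subset H$. Under the hypothesis $\delta_B((x+y)/2)<m_2\sqrt t$, concavity of $z\mapsto 1-|z|$ forces $\delta_B(x)\vee\delta_B(y)\leq 2\delta_B((x+y)/2)<2m_2\sqrt t$, while $|x+y|^2+|x-y|^2\leq 4$ yields $|x-y|^2\leq 8\delta_B((x+y)/2)<8m_2\sqrt t$. Thus all the scale-free quantities $\delta_B(x)/\sqrt t$, $\delta_B(y)/\sqrt t$, $|x-y|^2/\sqrt t$ are of order $O(m_2)$, and $x,y$ are both close to the boundary and to $\xi$.

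The strong Markov property at the first exit time $\tau_B$ of $B$ gives the key identity
\begin{equation*}
k_H(t,x,y)-k_B(t,x,y)=\E_x\!\left[k_H(t-\tau_B,X_{\tau_B},y)\,;\,\tau_B<t\right]\!,
\end{equation*}
and from $k_H(t,x,y)=k(t,x,y)(1-e^{-\delta_H(x)\delta_H(y)/t})$ together with $\delta_H(x)\delta_H(y)/t\lesssim m_2^2$, a first-order Taylor expansion yields $k_H(t,x,y)=(\delta_H(x)\delta_H(y)/t)\,k(t,x,y)+O(m_2^4)\,k(t,x,y)$, whose remainder is absorbed into $\sqrt t\cdot k_B$ via the two-sided estimate \eqref{eq:MS}. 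The geometric comparison $\delta_H(z)-\delta_B(z)=|z|(1-\langle z/|z|,\xi\rangle)\lesssim |z-\xi|^2$, combined with $|z-\xi|\lesssim\sqrt{\delta_B((x+y)/2)}$ for $z\in\{x,y\}$ (obtained by splitting $z-\xi=(z-(x+y)/2)+((x+y)/2-\xi)$), lets one replace $\delta_H(x)\delta_H(y)$ by $\delta_B(x)\delta_B(y)$ up to an error of order $\sqrt{\delta_B((x+y)/2)/\sqrt t}\,k_B$.

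The main obstacle is estimating the exit contribution $\E_x[k_H(t-\tau_B,X_{\tau_B},y);\tau_B<t]$. I would split the expectation according to whether $X_{\tau_B}$ lies in a spherical cap about $\xi$ of suitable angular radius: inside the cap a direct computation with the half-space kernel applies, while outside the cap the Gaussian tail bound on $k_H(t-\tau_B,X_{\tau_B},y)$ controls the decay in $|X_{\tau_B}-y|^2/(t-\tau_B)$. In both regimes the bound is packaged using \eqref{eq:MS} to match the error $C(\sqrt t+\sqrt{\delta_B((x+y)/2)/\sqrt t})k_B$. The delicate point is the quantitative balance between the exit distribution of Brownian motion from $B$ seen from $x$ and the decay of $k_H$ near $\xi$, which is where the specific geometry of the ball and the assumption $t<m_1$ come into play.
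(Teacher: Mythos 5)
Your plan has a structural gap caused by the choice of half-space, which makes both the ``replacement'' step and the exit-contribution estimate fail in a range of arguments fully permitted by the hypotheses.

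With $H=H_\xi$ tangent at $\xi=(x+y)/|x+y|$, you only control the discrepancies $\delta_H(z)-\delta_B(z)$ additively: $\delta_H(z)-\delta_B(z)\lesssim|z-\xi|^2\lesssim\delta_B\big(\tfrac{x+y}{2}\big)$ for $z\in\{x,y\}$, with no factor of $\delta_B(z)$ on the right. But to replace $\delta_H(x)\delta_H(y)$ by $\delta_B(x)\delta_B(y)$ within an error of
$\sqrt{\delta_B\big(\tfrac{x+y}{2}\big)/\sqrt t}\cdot k_B\approx\sqrt{\delta_B\big(\tfrac{x+y}{2}\big)/\sqrt t}\cdot\big(\delta_B(x)\delta_B(y)/t\big)\,k(t,x,y)$,
you would in particular need
$\big(\delta_H(x)-\delta_B(x)\big)\delta_B(y)\lesssim\sqrt{\delta_B\big(\tfrac{x+y}{2}\big)/\sqrt t}\,\delta_B(x)\delta_B(y)$,
i.e.\ $\sqrt{\delta_B\big(\tfrac{x+y}{2}\big)\sqrt t}\lesssim\delta_B(x)$. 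This fails whenever $\delta_B(x)\ll\sqrt{\delta_B\big(\tfrac{x+y}{2}\big)\sqrt t}$; take for instance $x\in\partial B$ and $y$ near $\partial B$ with $y$ off the ray through $x$, so that $\delta_H(x)-\delta_B(x)=\delta_{H_\xi}(x)\approx\delta_B\big(\tfrac{x+y}{2}\big)>0$ while $\delta_B(x)=0$. The same defect kills the exit-contribution estimate: $k_{H_\xi}(t,x,y)\approx\big(\delta_{H_\xi}(x)\delta_{H_\xi}(y)/t\big)k(t,x,y)$ can be much larger than $k_B(t,x,y)\approx\big(\delta_B(x)\delta_B(y)/t\big)k(t,x,y)$ in exactly this regime, so the nonnegative quantity $\E^x[k_{H_\xi}(t-\tau_B,W(\tau_B),y);\tau_B<t]=k_{H_\xi}(t,x,y)-k_B(t,x,y)$ is $\gg k_B(t,x,y)$ and cannot be bounded by the target error regardless of how the boundary integral is dissected; the Taylor remainder $O\big((\delta_H(x)\delta_H(y)/t)^2\big)k$ suffers the same way. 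Theorem~\ref{thm:Thm2} remains true because these contributions cancel, but your decomposition bounds them separately.

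The paper avoids both problems. It works with the half-space $H_{xy}$ whose boundary $P_{xy}$ passes through both $x/|x|$ and $y/|y|$, which yields the \emph{exact multiplicative} relations $\delta_{H_{xy}}(x)=\cos\angle(P_{xy},P_x)\,\delta_B(x)$ and $\delta_{H_{xy}}(y)=\cos\angle(P_{xy},P_x)\,\delta_B(y)$; the common factor $\cos\theta$ makes Lemma~\ref{lem:kH-ddk} (the analogue of your replacement step) uniform in $\delta_B(x),\delta_B(y)$. Crucially, the paper does \emph{not} apply a Hunt-formula estimate directly to $(x,y)$: Lemma~\ref{lem:2.2} first runs Chapman--Kolmogorov over an intermediate point $z$ confined to a collar $C_2\setminus C_1$ where $\delta(z)\gtrsim d\approx t^{1/4}\sqrt{\delta\big(\tfrac{x+y}{2}\big)+t}$ is bounded away from the boundary scale, and only there invokes the half-space comparison of Lemma~\ref{lem:2.1}, whose error factor $(\delta_{H_x}(z)-\delta(z))/\delta(z)$ needs precisely this lower bound on $\delta(z)$. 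Your one-sentence sketch of the exit contribution (``split by a spherical cap about $\xi$'') also does not engage with the hard boundary-integral analysis that Lemma~\ref{lem:2.1} and Proposition~\ref{prop:estints} carry out.
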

In case of Theorem \ref{thm:Thm2} we could say that if the distance from  the middle point between $x$ and $y$ to the boundary is  much shorter than $\sqrt t$, then the process is similar to a process living in a suitably chosen half-space (see Lemma \ref{lem:2.2}). In fact,  $\delta_B(x)\delta_B(y)/t$ may be replaced by $1-e^{-\delta_B(x)\delta_B(y)/t}$, which  shows more accurately the connection with the form of the half-space heat kernel. 

Since short-time asymptotics of Dirichlet heat kernels describing boundary behaviour have been known  only for simple sets as half-space or an interval, there were no methods developed for solving such problems. Some ideas are taken from the recent paper \cite{MS}, where estimates have been obtained, however, providing asymptotics requires much more effort and  care of details. As mentioned before, even knowing the estimates, it was not clear what kind of asymptotics one should expect. The first and crucial step in proving Theorems \ref{thm:Thm1} and \ref{thm:Thm2} was to approximate the heat kernel of a ball by the heat kernel $k_{H_x}(t,x,y)$ of the half-space $H_x$ in suitable range of argument. It has been achieved by combination of strong Markov property and $n$-dimensional analysis. In particular, the density of a convolution of two inverse-gamma distributions has been estimated, since it appears naturally when employing strong Markov property for Brownian motion.    
A lot of geometrical arguments has been used as well.
Then, in view of explicit and compact form of $k_{H_x}(t,x,y)$, 
furher approximations were possible. Both, the result and the methods presented in the paper may be applied in more general setting as e.g. in estimating  Dirichlet heat kernels of $\mathcal C^{1,1} $ domains.
 
Let now $q^B_x(t,z)$ be the density of the joint distribution of first hitting time and place of Brownian motion exiting a ball.  The representation of 
$q^B_x(t,z)$ as a derivative of $k_B(t,x,y)$  in the inward norm direction (see \eqref{eq:hx:diff}) implies $q^B_x(t,z)=\lim_{h\nearrow1}\(k(t,x,hz)/\delta(hz)\)$.  Thus, dividing inequalities in Theorems \ref{thm:Thm1} and \ref{thm:Thm2} by $\delta_B(y)$ and letting it tend to zero,  we directly obtain the below-given  asymptotics of $q^B_x(t,z)$. So far, only estimates of $q^B_x(t,z)$ \cite{MS} and asymptotics of density of hitting time \cite{S} have been known. 
\begin{corollary} There are constants $C_1,C_2,m_1,m_2, M>0$ depending only on $n$ such that for $\frac{\delta_B\(\frac{x+z}2\)}{\sqrt t}>M$ we have 
\begin{align*}
&\Bigg|q^B_x(t,z) -\left(1-e^{-2\delta(x)\,\delta_{H_x}\left(\frac{x+z}2\right)/t}\right)\frac{2\delta_{H_z}\left(\frac{x+z}2\right)}tk(t,x,y)\Bigg|\leq C_1 \sqrt{\frac{\sqrt t}{\delta_B\(\frac{x+z}2\)}}q^B_x(t,z),
\end{align*}
while for  $t<m_1$ and $\frac{\delta_B\(\frac{x+z}2\)}{\sqrt t}<m_2$ it holds
$$\left|q^B_x(t,z)-\frac{\delta(x)}tk(t,x,z)\right|\leq C_2  \(\sqrt t+\sqrt{\frac{\delta_B\(\frac{x+z}{2}\)}{\sqrt t}}\)q^B_x(t,z).$$
\end{corollary}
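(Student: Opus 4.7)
The plan is to exploit the derivative identity $q^B_x(t,z) = \lim_{h \nearrow 1} k_B(t,x,hz)/\delta_B(hz)$ provided by \eqref{eq:hx:diff}. Fix $z \in \partial B$, and for $h \in (0,1)$ close to $1$ set $y = hz$, so that $y/|y| = z$ and hence $H_y = H_z$; in particular $\delta_{H_y}(y) = \delta_B(y) = 1-h$. I would divide both inequalities of Theorems \ref{thm:Thm1} and \ref{thm:Thm2} through by $\delta_B(y)$ and send $h \nearrow 1$. On the right-hand side of each inequality, $k_B(t,x,y)/\delta_B(y) \to q^B_x(t,z)$ by the derivative identity; the same limit converts the $k_B(t,x,y)$ in the main term on the left-hand side into $q^B_x(t,z)$, while the remaining geometric and exponential pieces extend continuously.

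For Theorem \ref{thm:Thm1} I would write the asymptotic factor as
\begin{equation*}
A(x,y) = \left(1-e^{-2\delta_{H_x}(x)\,\delta_{H_x}((x+y)/2)/t}\right)\left(1-e^{-2\delta_{H_y}(y)\,\delta_{H_y}((x+y)/2)/t}\right),
\end{equation*}
and observe that, since $\delta_{H_y}(y) = 1-h \to 0$, a first-order Taylor expansion gives
\begin{equation*}
\frac{1-e^{-2(1-h)\delta_{H_z}((x+hz)/2)/t}}{1-h}\longrightarrow \frac{2\delta_{H_z}((x+z)/2)}{t}.
\end{equation*}
The remaining factor of $A$ and the free kernel $k(t,x,y)$ are continuous at $y=z$, and the error factor $\sqrt{\sqrt t/\delta_B((x+y)/2)}$ also extends continuously because $(x+z)/2$ lies strictly inside $B$ under the hypothesis $\delta_B((x+z)/2)/\sqrt t > M$; this hypothesis is open in $z$, so $y = hz$ still satisfies the assumption of Theorem \ref{thm:Thm1} for $h$ sufficiently close to $1$.

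For Theorem \ref{thm:Thm2} the computation is cleaner: dividing $\delta_B(x)\delta_B(y) k(t,x,y)/t$ by $\delta_B(y)$ gives $\delta_B(x) k(t,x,y)/t$, which converges to $\delta(x) k(t,x,z)/t$ by continuity, and the error factor $\sqrt t + \sqrt{\delta_B((x+y)/2)/\sqrt t}$ likewise extends continuously up to $y=z$. The only real obstacle is bookkeeping: one must verify that the absolute-value inequality survives division by $\delta_B(y)$ and passage to the limit, which reduces to the continuity of every quantity in sight on $B\setminus\{z\}$ together with the derivative identity and the one Taylor expansion above. All the genuine analytic work has already been done in Theorems \ref{thm:Thm1} and \ref{thm:Thm2}.
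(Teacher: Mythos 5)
Your proposal is exactly the argument the paper gives (in the paragraph preceding the corollary): identify $q^B_x(t,z)=\lim_{h\nearrow1}k_B(t,x,hz)/\delta_B(hz)$ via \eqref{eq:hx:diff}, divide the inequalities of Theorems \ref{thm:Thm1} and \ref{thm:Thm2} by $\delta_B(y)$ with $y=hz$, and pass to the limit $h\nearrow1$; the Taylor step $(1-e^{-2(1-h)\delta_{H_z}((x+hz)/2)/t})/(1-h)\to 2\delta_{H_z}((x+z)/2)/t$ and the observation $\delta_{H_y}(y)=\delta_B(y)=1-h$ are the only points that need care, and you handle both correctly.
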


The paper is organized as follows. In Section 2 we introduce notation and provide some useful facts concerning Brownian motion. Sections 3 and 4 are devoted to the proofs of Theorem \ref{thm:Thm1} and Theorem \ref{thm:Thm2}, respectively.  Finally, in Appendix we  gather several technical lemmas that are exploited in the proofs.
\section{Preliminaries}
\subsection{Notation}\label{sec:notation}
We write $f\lesssim g$ whenever there exists a constant $c>0$ depending only on a dimension $n$ such that $f<cg$ holds for the indicated range of the arguments of functions $f$ and $g$. If $f\lesssim g$ and $g\lesssim f$, the we write $f\approx g$. 

By $|x|$ we denote the Euclidean norm of a point $x\in \R^n$. We write $B_k(x_0,r)=\{x\in\R^k:|x-x_0|<r\}$ for a $k$-dimensional ball of a radius $r>0$ centered at $x_0\in\R^k$. In the basic case $x_0=0$, $r=1$ and $k=n$ we simply denote $B=B_n(0,1)$. 

For $x\in B$, $x\neq 0$, we write $P_x$ for hyperplane tangent to $B$ at the point $\frac{x}{|x|}$. The half-space bounded by $P_x$ and containing the ball $B$ will be denoted by $H_x$.  Additionally, by $P_{xy}$ we denote the hyperplane   that contains  $x/|x|, y/|y|\in \partial B$ and such that it is perpendicular to the vector $\frac12\(\frac x{|x|}+\frac y{|y|}\)$. For example, if $x=(x_1,x_2,0,...,0)$, $y=(y_1,y_2,0,...,0)$ and $l$ is the line in $\R^2$ containing $(x_1,x_2)/|x|$ and $(y_1,y_2)/|y|$ then $P_{xy}=l\times \R^{n-2}$. Furthermore, we  define $H_{xy}$ as the half-space bounded by $P_{xy}$ and containing $x$ and $y$.

For a domain $D\subset \R^n$ and $x\in D$ we write $\delta_D(x)$ for a distance of $x$ to the boundary $\partial D$. As previously, we shorten the notation in the case  $D=B$ and just write $\delta(x) = \delta_{B}(x)=1-|x|$. Let us note that the distance of a middle point between $x$ and $y$ to the boundary of $B$ may be estimated as follows (\cite{MS}, formula (2.2)) 
\begin{eqnarray}
  \label{eq:parallel}
\delta\left(\frac{x+y}{2}\right)\geq \frac{|x-y|^2}{8}+\frac{\delta(x)}{4}+\frac{\delta(y)}{4} \geq \frac12 \delta\left(\frac{x+y}{2}\right).
\end{eqnarray}
In particular, this implies
\begin{align}\nonumber
\left|\frac{x}{|x|}-\frac{y}{|y|}\right|&\leq \delta(x)+|x-y|+\delta(y)\leq \sqrt{\delta(x)}+\sqrt{|x-y|^2}+\sqrt{\delta(y)}\\\label{eq:x_0-y_0}
&\leq \sqrt{3\(\delta(x)+|x-y|^2+\delta(y)\)}\leq 2\sqrt6\,\sqrt{\delta\(\frac{x+y}2\)},
\end{align}
where we used the inequality between arithmetic mean and root mean square.
\subsection{Brownian motion}
We consider $n$-dimensional, $n\geq 2$, Brownian motion $W=(W(t))_{t\geq 0}=(W_1(t),...,W_n(t))_{t\geq 0}$ starting from $x\in\R^n$ and we denote by $\pr^x$ and $\ex^x$ the corresponding probability law and the expected value, respectively. Obviously $\pr^x$ is absolutely continuous with respect to the Lebesgue measure and $\kt$ is the corresponding transition probability density.

For a general  domain $D\subset \R^d$ we define the first exit time from $D$ by
\begin{eqnarray*}
   \tau_D = \inf\{t>0: W(t)\notin D\}\/.
\end{eqnarray*}
We write $\kernel_D(t,x,y)$ for the transition probability density for  Brownian motion $W^{D}=(W^D(t))_{t\geq 0}$ killed upon leaving a set $D$.  The relation between $\kernel_D(t,x,y)$ and $\kt$ together with the joint distribution of $(\tau_D, W(\tau_D))$ is described by the Hunt formula
\begin{eqnarray}
   \label{eq:Hunt:general}
	\kernel_D(t,x,y) = \kt-\ex^x[t>\tau_D, k(t-\tau_D,W(\tau_D),y)]\/,\quad x,y\in D\/,\quad t>0\/.
\end{eqnarray}
Denoting the density function of $(\tau_D,W(\tau_D))$ by $q^D_x(t,z)$, the Hunt formula takes the form
\begin{eqnarray}
\label{eq:Hunt:ball}
   k_D(t,x,y) = \kt-\int_0^t \int_{\partial D}\kernel(t-s,z,y)q^D_z(s,z)dsd\sigma(z)\/,
\end{eqnarray}
where $\sigma$ is the surface measure on $\partial D$. Note also that if $D$ is a $\mathcal C^3$ then we can recover $q^D_x(t,z)$ from $k_D(t,x,y)$ by differentiating it in the inward norm direction (see \cite{H})
\begin{eqnarray}
 \label{eq:hx:diff}
  q^D_x(t,z) = \dfrac{\partial}{\partial n_z}\kernel_D(t,x,z)\/,\quad x\in D\/,z\in\partial D\/,t>0\/.
\end{eqnarray}
In particular, in case of the  ball $B$ we may use the estimates \eqref{eq:MS} and get
\begin{align}\label{eq:qest}
q^B_x(t,z)=\lim_{h\downarrow0}\frac{k_B(t,x,(1-h)z)}{h}\approx \(\frac{\delta(x)}t+\frac{|x-z|^2}t\left(1\wedge\frac{\delta(x)|x-z|^2}t\right)\)k(t,x,z).
\end{align}
Due to the reflection principle, in case of a half-space a simple explicit formula may be derived. More precisely, for $H=\{x\in \R^n: x_1>0\}$  we have
\begin{eqnarray*}
   k_H(t,x,y) = \kt-k(t,x,(-y_1,y_2,...,y_n))=\(1-e^{-\frac{x_1y_1}{t}}\)\kt.
\end{eqnarray*} 
Generally, by rotational and translational invariance of Brownian motion,  for any half-space $H$ the following holds
\begin{align}\label{eq:kHform}
   k_H(t,x,y) &=\(1-e^{-\frac{\delta_H(x)\delta_H(y)}{t}}\)\kt\\
	\label{eq:kHest}&\approx \(1\wedge \frac{\delta_H(x)\delta_H(y)}{t}\)\kt,\ \ \ x,y\in H,\ \ t>0.
\end{align}

\section{Proof of Theorem \ref{thm:Thm1}}

First, we prove the assertion of Theorem \ref{thm:Thm1} in case when $\delta(y)/\sqrt t\rightarrow\infty$ with a bit different form of the  convergence rate.

\begin{lemma}\label{lem:1.1}
There is $M>0$ such that for $\frac{\delta(y)}{\sqrt t}>M$ we have
$$|k_B(t,x,y)-k_{H_x}(t,x,y)|\lesssim \frac t{(\delta(y))^2}k_{B}(t,x,y).$$
\end{lemma}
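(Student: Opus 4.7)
The strategy is to compare the two kernels via the strong Markov property at $\tau_B$. Since $B\subset H_x$, applying it to the process killed on leaving $H_x$ yields the two-domain version of Hunt's formula
\begin{equation*}
k_{H_x}(t,x,y)-k_B(t,x,y)=\int_0^t\int_{\partial B}k_{H_x}(t-s,z,y)\,q^B_x(s,z)\,d\sigma(z)\,ds\ge 0.
\end{equation*}
In particular $k_B\le k_{H_x}$, so it suffices to bound the right-hand side by $\tfrac{t}{\delta(y)^2}k_B(t,x,y)$. Plugging in the explicit form \eqref{eq:kHform} with the inequality $1-e^{-a}\le a$, the elementary identity $\delta_{H_x}(z)=\tfrac12|z-x/|x||^2$ valid for $z\in\partial B$ (which follows at once from $|z|=1$), and the trivial bound $\delta_{H_x}(y)\le 2$, gives
\begin{equation*}
k_{H_x}(t-s,z,y)\le\frac{|z-x/|x||^2}{t-s}\,k(t-s,z,y).
\end{equation*}

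To absorb the remaining factor $k(t-s,z,y)\,q^B_x(s,z)$ back into $k_B(t,x,y)$, I would split $\partial B$ according to the polar distance $r(z):=|z-x/|x||$ into a near region $A_1=\{r(z)\le c\sqrt t/\delta(y)\}$ and its complement $A_2$. On $A_1$ the prefactor $r(z)^2$ is already of size $t/\delta(y)^2$, so the required smallness is in hand; what remains is to estimate the double integral by a constant multiple of $k_B(t,x,y)$, using \eqref{eq:qest} for $q^B_x$, a Gaussian comparison between $k(t-s,z,y)$ and $k(t,x,y)$ based on \eqref{eq:x_0-y_0} and the hypothesis $\delta(y)/\sqrt t>M$ (which forces $|z-y|$ to be comparable to $|x-y|\vee\delta(y)$), and finally the lower estimate in \eqref{eq:MS} to convert $k(t,x,y)$ into $k_B(t,x,y)$. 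On $A_2$ the prefactor $r(z)^2$ is only bounded by $4$, but here the Gaussian tail of $q^B_x(s,z)$ of order $\exp(-cr(z)^2/s)$, visible from \eqref{eq:qest}, provides the missing smallness; combined with the lower bound $|z-y|\ge\delta(y)$ and an integration in $s$ over $(0,t)$, this again yields a contribution of order $(t/\delta(y)^2)\,k_B(t,x,y)$.

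The main obstacle will be the $A_2$ analysis: the bound $r(z)^2\le 4$ is too crude, so one must extract the Gaussian factor $e^{-cr(z)^2/s}$ from $q^B_x(s,z)$ and carefully balance it against the $(t-s)^{-1}$ created by the linearisation $1-e^{-a}\le a$, so that the $s$-integration on $(0,t)$ produces only a factor of order $t$ and no logarithmic loss. The assumption $\delta(y)/\sqrt t>M$, through \eqref{eq:x_0-y_0}, is precisely what enables this: it guarantees that the effective mass of $q^B_x(s,\cdot)$ sits in $A_1$, so that only Gaussian tails have to be summed on $A_2$. Once this accounting is carried out, both contributions assemble into a bound of the required form $\tfrac{t}{\delta(y)^2}\,k_B(t,x,y)$.
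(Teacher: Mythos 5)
Your approach is genuinely different from the paper's. The paper does not touch the Hunt formula for Lemma \ref{lem:1.1}: it handles the case $\delta(x)>\frac1{64}\delta(y)$ via van den Berg's bound \eqref{eq:vdB}, and in the complementary case it imports a \emph{local} comparison $|k_B-k_{H_x}|\lesssim\bigl(e^{-|x-y|^2/16t}+t/\delta(y)^2\bigr)k_{H_x}$, valid for $y\in B\bigl(\frac{15}{16}\frac{x}{|x|},\frac1{16}\bigr)$, from Proposition 3 of \cite{MS}, and then extends it to all $y$ by a Chapman--Kolmogorov splitting $k_B(t)=\int k_B(t/32)k_B(31t/32)$ together with Corollary \ref{cor:CKHH}. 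Your plan — go directly to $k_{H_x}-k_B=\int_0^t\int_{\partial B}k_{H_x}(t-s,z,y)q^B_x(s,z)\,d\sigma\,ds$, linearise the half-space kernel, and split $\partial B$ into a cap near $x/|x|$ and its complement — essentially re-derives that cited local estimate from scratch, and is in spirit closer to how the paper proves Lemma \ref{lem:2.1} (Hunt formula, explicit $q^B_x$ via \eqref{eq:qest}, and the convolution estimate of Proposition \ref{prop:estints}). It buys independence from \cite{MS}'s Proposition 3 at the cost of having to redo the delicate two-parameter time/space integration that that reference already contains.

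There are, however, concrete gaps. First, the cap $A_1=\{r(z)\le c\sqrt t/\delta(y)\}$ does not have the property you use. You need $r(z)\ll\delta(y)$ so that $|z-y|\ge|x/|x|-y|-r(z)\gtrsim\delta(y)$, but $c\sqrt t/\delta(y)\le\frac12\delta(y)$ requires $\sqrt t\lesssim\delta(y)^2$, which is strictly stronger than the hypothesis $\sqrt t<\delta(y)/M$ (take $\delta(y)$ of order $t^{1/3}$, say, for a counterexample). The remedy is to take $A_1=\{r(z)\le c\sqrt t\}$, which still gives $r(z)^2\le c^2t\le c^2t/\delta(y)^2$ since $\delta(y)\le 1$, and now $r(z)\le c\sqrt t<(c/M)\delta(y)\ll\delta(y)$ once $M$ is large. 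Second, even with this repair, the estimate $\int_0^t\int_{A_1}\frac1{t-s}k(t-s,z,y)\,q^B_x(s,z)\lesssim k_B(t,x,y)$ is not a soft consequence of \eqref{eq:qest} and the lower bound in \eqref{eq:MS}: without the factor $\frac1{t-s}$ the left side equals $k(t,x,y)-k_B(t,x,y)$, which can be of order $k(t,x,y)\gg k_B(t,x,y)$ when $\delta(x)$ is small, so the gain has to be extracted quantitatively from the convolution in $s$ (this is exactly what Proposition \ref{prop:estints} is designed to do), not just from the pointwise prefactor. Third, the $A_2$ part, which you flag yourself, is not merely bookkeeping: the Gaussian $e^{-cr(z)^2/s}$ is bounded below by $e^{-cr(z)^2/t}$, which for $\delta(y)$ of order one is $e^{-O(1)}$ and provides no smallness; the needed factor $t/\delta(y)^2$ there has to come from balancing the Gaussian in $s$ against $1/(t-s)$ via a two-sided convolution bound, and from the $\delta(x)$ carried by $q^B_x$. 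None of this is impossible — it is essentially the content of Proposition \ref{prop:estints} and the $I_1,I_2,I_3$ decomposition in Lemma \ref{lem:2.1} — but as written the proposal leaves precisely the hard analytic step unproved, and the stated cap $A_1$ would derail it before one even gets there.
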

\begin{proof}

 If $\delta(x)>\frac1{64}\delta(y)$, then for  $\delta(y)/\sqrt t$ large enough inequality \eqref{eq:vdB} implies
$$|k_B(t,x,y)-k_{H_x}(t,x,y)|\lesssim \frac t{(\delta(y))^2},$$
so we assume  $\delta(x)< \frac1{64}\delta(y)$. In particular, this implies  $|x-y|\geq \frac12 \delta(y)$.
It is shown in the proof of Proposition 3 in \cite{MS} that for $y\in B\left(\frac{15}{16}\frac{x}{|x|},\frac{1}{16}\right) $ the following estimate holds
\begin{align}\label{eq:k-k}
|k_B(t,x,t)-k_{H_x}(t,x,y)|\lesssim \left(e^{-\frac{|x-y|^2}{16t}}+\frac{t}{\delta(y)^2}\right)k_{H_x}(t,x,y),
\end{align}
which, under current assumptions, gives us for  $y\in B\left(\frac{15}{16}\frac{x}{|x|},\frac{1}{16}\right)$ 
\begin{align}\label{eq:ineq1/16}
|k_B(t,x,t)-k_{H_x}(t,x,y)|\lesssim \frac{t}{\delta(y)^2}k_{H_x}(t,x,y).
\end{align}
 Next, we will get rid of the assumption $y\in B\left(\frac{15}{16}\frac{x}{|x|},\frac{1}{16}\right)$. Using the inequality $\delta(x)< \frac1{64}\delta(y)$ we get for any $y\in B$
\begin{align*}
\left|\frac1{32}y+\frac{31}{32}x-\frac{15}{16}\frac{x}{|x|}\right|&\leq\left|\frac1{32}y+\frac{31}{32}x-\left(\frac1{32}y+\frac{31}{32}\frac x{|x|}\right)\right|+\left|\frac1{32}y+\frac{31}{32}\frac x{|x|}-\frac{15}{16}\frac{x}{|x|}\right|\\
&=\frac{31}{32}(1-|x|)+\frac1{32}\left|y-\frac{x}{|x|}\right|\leq \frac{31}{32}(1-|x|)+\frac1{32}\sup_{z\in\partial B}|y-z|\\
&= \frac{31}{32}\delta(x)+\frac1{32}(2-\delta(y))\leq \frac1{16}-\frac1{64}\delta(y)+\(\delta(x)-\frac1{64}\delta(y)\)\\
&\leq \frac1{16}-\frac1{64}\delta(y),
\end{align*}
which implies
\begin{align}\label{eq:BsubsetB}
B_n\left(\frac1{32}y+\frac{31}{32}x,\,\frac1{64}\delta(y)\right)\subset B_n\(\frac{15}{16}\frac{x}{|x|},\frac1{16}\).
\end{align} 
We therefore apply Chapman-Kolmogorov identity in the following way
\begin{align*}
k_B(t,x,y)-k_{H_x}(t,x,y)&=\int_{B_n\left(\frac1{32}y+\frac{31}{32}x,\,\frac1{64}\delta(y)\right)}+\int_{B\bs B_n\left(\frac1{32}y+\frac{31}{32}x,\,\frac1{64}\delta(y)\right)}\\
&\ \ \ \ k_B(t/32,x,z)k_B(31t/32,z,y)-k_{H_x}(t/32,x,z)k_{H_x}(31t/32,z,y)dz\\
&\ \ \ - \int_{H_x\bs B}k_{H_x}(t/32,x,z)k_{H_x}(31t/32,z,y)dz\\[10pt]
&=I_1+I_2-I_3.
\end{align*}
Corollary \ref{cor:CKHH} gives us
\begin{align}\nonumber
|I_2|, |I_3|&\leq 2\int_{H_x\bs B_n\left(\frac1{32}y+\frac{31}{32}x,\,\frac1{64}\delta(y)\right)} k_{H_x}(t/32,x,z)k_{H_x}(31t/32,z,y)dz\\\label{eq:aux6}
&\lesssim k_{H_x}(t,x,y)\exp\left(-c\frac{(\delta(y))^2}{t}\right)\lesssim \frac t{(\delta(y))^2}k_{H_x}(t,x,y),
\end{align}
for some constant $c>0$.
In order to deal with the integral $I_1$, let us note that  $\delta\left(\frac1{32}y+\frac{31}{32}x\right)>\delta(y)/32$.  Hence, for $z\in B_n\left(\frac1{32}y+\frac{31}{32}x,\,\frac1{64}\delta(y)\right)$ we have $\delta(z)>\delta(y)/64$ and, by \eqref{eq:vdB},
$$|k_B(31t/32,z,y)-k_{H_x}(31t/32,z,y)|\lesssim\frac t{(\delta(y))^2}k_{H_x}(31t/32,x,z).$$
Furthermore, by \eqref{eq:BsubsetB} and \eqref{eq:ineq1/16}, we obtain
$$|k_B(t/32,x,z)-k_{H_x}(t/32,x,z)|\lesssim\frac t{(\delta(y))^2}k_{H_x}(t/32,x,z).$$
Thus, we get
\begin{align}\nonumber
&|k_B(t/32,x,z)k_B(31t/32,z,y)-k_{H_x}(t/32,x,z)k_{H_x}(31t/32,z,y)|\\\nonumber
&\leq |\(k_B(t/32,x,z)-k_{H_x}(t/32,x,z)\)k_B(31t/32,z,y)|\\\nonumber
&\ \ \ +|k_{H_x}(t/32,x,z)\(k_B(31t/32,z,y)-k_{H_x}(31t/32,z,y)\)|\\\label{eq:aux7}
&\leq \frac t{(\delta(y))^2}k_{H_x}(t/32,x,z)k_{H_x}(31t/32,z,y),
\end{align}
and consequently
\begin{align*}
|I_1|\lesssim \frac t{(\delta(y))^2}\int_{H_x}k_{H_x}(t/32,x,z)k_{H_x}(31t/32,z,y)dz\lesssim  \frac t{(\delta(y))^2}k_{H_x}(t,x,y).
\end{align*}
This together with \eqref{eq:aux6} let us write 
$$|k_B(t,x,y)-k_{H_x}(t,x,y)|\lesssim \frac t{(\delta(y))^2}k_{H_x}(t,x,y).$$
In particular, for $\delta(y)/\sqrt t$ large enough we have $k_B(t,x,y)\approx k_{H_x}(t,x,y)$, so we can replace $k_{H_x}(t,x,y)$ by $k_{B}(t,x,y)$ in the estimate above. 
\end{proof}

\begin{proof}[Proof of Theorem \ref{thm:Thm1}]
Let us  decompose $k_B(t,x,y)$ using Chapman-Kolmogorov identity as follows
\begin{align*}
k_B(t,x,y)&=\int_{B_n\left(\frac{x+y}2,R\)}+\int_{B\bs B_n\left(\frac{x+y}2,R\right)}k_B(t/2,x,z)k_B(t/2,z,y)dz\\[8pt]
&=I_1+I_2,
\end{align*}
where $R=R(x,y)=\sqrt{\delta\left(\frac{x+y}2\right)\sqrt t}$. Such a choice of $R(x,y)$ guarantees that $R(x,y)/\sqrt t\rightarrow\infty$ and $R(x,y)/\delta\left(\frac{x+y}2\right)\rightarrow0$ as $\delta\left(\frac{x+y}2\right)/\sqrt t\rightarrow\infty$. The latter limit implies    $\delta\left(z\right)\approx \delta\left(\frac{x+y}2\right)$  for $z\in B_n\left(\frac{x+y}2,R\right)$. 
Hence, by Lemma \ref{lem:1.1}, we get for $\delta\left(\frac{x+y}2\right)/\sqrt t$ large enough
\begin{align*}\nonumber
&\left|I_1-\int_{B_n\left(\frac{x+y}2,R\)}k_{H_x}(t/2,x,z)k_{H_y}(t/2,z,y)dz\right| \\\nonumber
&\leq\left|\int_{B_n\left(\frac{x+y}2,R\)}k_{B}(t/2,x,z)\(k_{B}(t/2,z,y)-k_{H_y}(t/2,z,y)\)dz\right|\\\nonumber
&\ \ \ +\left|\int_{B_n\left(\frac{x+y}2,R\)}\(k_{B}(t/2,x,z)-k_{H_x}(t,x,z)\)k_{H_y}(t/2,z,y)dz\right|\\\nonumber
&\lesssim \frac{t}{\(\delta\left(\frac{x+y}2\right)\)^2}\int_{B_n\left(\frac{x+y}2,R\)}k_B(t/2,x,z)k_B(t/2,z,y)dz\\
&\leq \frac{t}{\(\delta\left(\frac{x+y}2\right)\)^2}k_B(t,x,y).
\end{align*}
Note that for large values of $\delta\left(\frac{x+y}2\right)/\sqrt t$, the right-hand side term in \eqref{eq:htxy:est}  is dominating, and therefore, by inequalities $|x-y|^2\leq 8\delta\left(\frac{x+y}2\right)$ and $\delta\left(w\right)\leq \delta_{H_x}(w)\wedge \delta_{H_y}(w)$, $w\in B$, we have
\begin{align*}
k_B(t,x,y)&\lesssim \(1\wedge \frac{\delta(x)\,\delta_{H_x}\left(\frac{x+y}2\right)}{t}\)\(1\wedge \frac{\delta(y)\,\delta_{H_y}\left(\frac{x+y}2\right)}{t}\)k(t,x,y)\\
&\approx \left(1-e^{-2\delta(x)\,\delta_{H_x}\left(\frac{x+y}2\right)/t}\right)\left(1-e^{-2\delta(y)\,\delta_{H_y}\left(\frac{x+y}2\right)/t}\right)k(t,x,y),
\end{align*}

which yields
\begin{align}\nonumber
&\left|I_1-\int_{B_n\left(\frac{x+y}2,R\)}k_{H_x}(t/2,x,z)k_{H_y}(t/2,z,y)dz\right|\\\label{eq:J2a}
&\lesssim \frac{t}{\(\delta\left(\frac{x+y}2\right)\)^2}\left(1-e^{-2\delta(x)\,\delta_{H_x}\left(\frac{x+y}2\right)/t}\right)\left(1-e^{-2\delta(y)\,\delta_{H_y}\left(\frac{x+y}2\right)/t}\right)k(t,x,y).
\end{align}
Next, since $\left|\delta_{H_x}(z)-\delta_{H_x}\(\frac{x+y}2\)\right|\leq \left|z-\frac{x+y}2\right|<R$, \eqref{eq:1-e} gives us
\begin{align*}
\left|\frac{1-e^{-2\delta(x)\,\delta_{H_x}\left(z\right)/t}}{1-e^{-2\delta(x)\,\delta_{H_x}\left(\frac{x+y}{2}\right)/t}}-1\right|
&\lesssim \left|\frac{\delta_{H_x}\left(z\right)-\delta_{H_x}\(\frac{x+y}{2}\)}{\delta_{H_x}\(\frac{x+y}{2}\)}\right|\leq \sqrt{\frac{\sqrt t}{\delta\(\frac{x+y}2\)}}, 
\end{align*}
which, together with \eqref{eq:kHform}, follows
\begin{align*}
&\left|k_{H_x}(t/2,x,z)-\left(1-e^{-2\delta(x)\,\delta_{H_x}\left(\frac{x+y}2\right)/t}\right)k(t/2,x,z)\right|\\
&\lesssim \sqrt{\frac{\sqrt t}{\delta\(\frac{x+y}2\)}}\left(1-e^{-2\delta(x)\,\delta_{H_x}\left(\frac{x+y}2\right)/t}\right)k(t/2,x,z).
\end{align*}
The same bound holds if we switch $x$ and $y$. Thus, analogously as in \eqref{eq:aux7}, we obtain
\begin{align*}
&\Bigg|\int_{B_n\left(\frac{x+y}2,R\)}k_{H_x}(t/2,x,z)k_{H_y}(t/2,z,y)dz\\
&\ \ \ \ -\left(1-e^{-2\delta(x)\,\delta_{H_x}\left(\frac{x+y}2\right)/t}\right)\left(1-e^{-2\delta(y)\,\delta_{H_y}\left(\frac{x+y}2\right)/t}\right)k(t,x,y)\Bigg|\\
&\leq\Bigg|\int_{B_n\left(\frac{x+y}2,R\)}k_{H_x}(t/2,x,z)k_{H_y}(t/2,z,y)\\
&\ \ \ \ -\left(1-e^{-2\delta(x)\,\delta_{H_x}\left(\frac{x+y}2\right)/t}\right)\left(1-e^{-2\delta(y)\,\delta_{H_y}\left(\frac{x+y}2\right)/t}\right)k(t/2,x,z)k(t/2,z,y)dz\Bigg|\\
&\ \ \ +\left(1-e^{-2\delta(x)\,\delta_{H_x}\left(\frac{x+y}2\right)/t}\right)\left(1-e^{-2\delta(y)\,\delta_{H_y}\left(\frac{x+y}2\right)/t}\right)\int_{\(B_n\left(\frac{x+y}2,R\)\)^c}k(t/2,x,z)k(t/2,z,y)|dz\\
&\lesssim \left(1-e^{-2\delta(x)\,\delta_{H_x}\left(\frac{x+y}2\right)/t}\right)\left(1-e^{-2\delta(y)\,\delta_{H_y}\left(\frac{x+y}2\right)/t}\right)\\
&\ \ \ \times\(\sqrt{\frac{\sqrt t}{\delta\(\frac{x+y}2\)}}\int_{B_n\left(\frac{x+y}2,R\)}k(t/2,x,z)k(t/2,z,y)dz+\int_{\(B_n\left(\frac{x+y}2,R\)\)^c}k(t/2,x,z)k(t/2,z,y)dz\)\\
&\lesssim \left(1-e^{-2\delta(x)\,\delta_{H_x}\left(\frac{x+y}2\right)/t}\right)\left(1-e^{-2\delta(y)\,\delta_{H_y}\left(\frac{x+y}2\right)/t}\right)k(t,x,y)\(\sqrt{\frac{\sqrt t}{\delta\(\frac{x+y}2\)}}+e^{-R^2/2t}\)\\
&\lesssim \sqrt{\frac{\sqrt t}{\delta\(\frac{x+y}2\)}}\left(1-e^{-2\delta(x)\,\delta_{H_x}\left(\frac{x+y}2\right)/t}\right)\left(1-e^{-2\delta(y)\,\delta_{H_y}\left(\frac{x+y}2\right)/t}\right)k(t,x,y),
\end{align*}
where we also used Corollary \ref{cor:CKRR}. Combining this with \eqref{eq:J2a} we arrive at
\begin{align}\nonumber
&\Bigg|I_1 -\left(1-e^{-2\delta(x)\,\delta_{H_x}\left(\frac{x+y}2\right)/t}\right)\left(1-e^{-2\delta(y)\,\delta_{H_y}\left(\frac{x+y}2\right)/t}\right)k(t,x,y)\Bigg|\\\label{eq:aux4}
&\lesssim \sqrt{\frac{\sqrt t}{\delta\(\frac{x+y}2\)}}\left(1-e^{-2\delta(x)\,\delta_{H_x}\left(\frac{x+y}2\right)/t}\right)\left(1-e^{-2\delta(y)\,\delta_{H_y}\left(\frac{x+y}2\right)/t}\right)k(t,x,y).
\end{align}

 It is now enough to  show   that $I_2$ is  suitably small.

 If $\angle(x,y)>\pi/2$ (where by $\angle(x,y)$ we mean the smaller non-negative angle between vectors $\vec x=(0,x)$ and $\vec y=(0,y)$), we have $\delta_{H_x}\left(\frac{x+y}2\right)\approx \delta_{H_y}\left(\frac{x+y}2\right)\approx1$. Consequently, using  \eqref{eq:kHest} and   Corollary \ref{cor:CKRR}, we get
\begin{align}\nonumber
I_2&\lesssim
\int_{B\bs B_n\left(\frac{x+y}2,R\right)}k_{H_x}(t/2,x,z)k_{H_y}(t/2,z,y)dz\\\nonumber
&\lesssim\left(1\wedge\frac{\delta(x)}{t}\right)\left(1\wedge\frac{\delta(y)}{t}\right)
\int_{B\bs B_n\left(\frac{x+y}2,R\right)}k(t/2,x,z)k(t/2,z,y)dz\\\label{eq:bigangle}
&\lesssim\left(1\wedge\frac{\delta(x)\delta_{H_x}\left(\frac{x+y}2\right)}{t}\right)\left(1\wedge\frac{\delta(y)\delta_{H_y}\left(\frac{x+y}2\right)}{t}\right)k(t,x,y)e^{-\delta\left(\frac{x+y}2\right)/2\sqrt t}.
\end{align}
Consider now $\angle(x,y)\leq\pi/2$. The main consequence of this assumption is that $\delta(x)\approx \delta_{H_{xy}}(x)$ and $\delta(y)\approx \delta_{H_{xy}}(y)$, where $H_{xy}$ is defined in Section \ref{sec:notation}.

Furthermore, recalling that $P_{xy}:=\partial H_{xy}$, simple geometry  and (\ref{eq:x_0-y_0}) yield 
\begin{align}\nonumber
\rho(x,y)&:=\max\{dist(z,P_{xy}):z\in B\cap (H_{x,y})^c\}\\\nonumber
&\,\,=1-\sqrt{1-\frac14\left|\frac{x}{|x|}-\frac{y}{|y|}\right|^2}\leq \frac14\left|\frac{x}{|x|}-\frac{y}{|y|}\right|^2\\
\label{eq:rho}
&\,\,<6\,\delta\left(\frac{x+y}2\right).
\end{align}
Note that for any $z\in B$  
$$\delta(z)\leq dist\(z,P_{xy}\)+\rho(x,y),$$
where $dist\(z,P_{xy}\)$ denotes the distance from $z$ to $P_{xy}$. Obiously, for $z\in H_{xy}$ we have $dist\(z,P_{xy}\)=\delta_{H_{xy}}(z)$. Then, we split $I_2$ into another two integrals:
\begin{align*}
I_2&= \int_{\substack{z\in B\bs B_n\left(\frac{x+y}2,\,R\right)\\[5pt] dist(z,P_{xy})\leq6\delta\(\frac{x+y}{2}\)}}+\int_{\substack{z\in B\bs B_n\left(\frac{x+y}2,\,R\right)\\[5pt]dist(z,P_{xy})>6\delta\(\frac{x+y}{2}\)}}k_B(t/2,x,z)k_B(t/2,z,y)dz\\[4pt]
&=:I_{2,1}+I_{2,2}.
\end{align*}
Since $dist(x,P_{xy})\leq \delta(x)\leq 2\delta\(\frac{x+y}2\)$, then the first inequality in \eqref{eq:parallel} implies that for $z\in B$  satisfying $dist(z,P_{xy})\leq6\delta\left(\frac{x+y}2\right)$ it holds
\begin{align*}
|x-z|^2&\leq 8\delta\left(\frac{x+z}2\right)\leq 8\(dist\(\frac{x+z}2,P_{xy}\)+\rho(x,y)\)\\
&=8\(\frac12dist\(x,P_{xy}\)+\frac12dist\(z,P_{xy}\)+\rho(x,y)\)\lesssim \delta\left(\frac{x+y}2\right).
\end{align*}
 Applying this and $\delta(z)\leq dist\(z,P_{xy}\)+\rho(x,y)<24\delta\(\frac{x+y}{2}\)$ to upper bound in  \eqref{eq:MS}  we obtain
$$k_B(t/2,x,z)\lesssim \left(1\wedge\frac{\delta(x)\delta\left(\frac{x+y}2\right)}{t}\right)k(t/2,x,z).$$
Hence, by Corollary \ref{cor:CKRR},
\begin{align}\nonumber
I_{2,1}
&\lesssim\left(1\wedge\frac{\delta(x)\delta\left(\frac{x+y}2\right)}{t}\right)\left(1\wedge\frac{\delta(y)\delta\left(\frac{x+y}2\right)}{t}\right) \int_{\left(B_n\left(\frac{x+y}2,\,R\right)\right)^c}k(t/2,x,z)k(t/2,z,y)dz\\\label{eq:J21}
&\lesssim\left(1\wedge\frac{\delta(x)\delta_{H_x}\left(\frac{x+y}2\right)}{t}\right)\left(1\wedge\frac{\delta(y)\delta_{H_y}\left(\frac{x+y}2\right)}{t}\right)k(t,x,y)e^{-\delta\left(\frac{x+y}2\right)/2\sqrt t}.
\end{align}
On the other hand, for  any $z\in B$ such that $dist(z,P_{xy})>6\,\delta\left(\frac{x+y}2\right)$, we have
\begin{align}\label{eq:aux12}
\delta(z)\leq dist(z,P_{xy})+\rho(x,y)\leq 2dist(z,P_{xy})=2 \delta_{H_{xy}}\left(z\right)
\end{align}
and, by \eqref{eq:parallel} and $\delta_{H_{xy}}(z)>\delta_{H_{xy}}(x)$,
\begin{align*}
|x-z|^2&\leq8\delta\left(\frac{x+z}2\right)\leq 8\(dist\(\frac{x+z}2,P_{xy}\)+\rho(x,y)\)\\
&\leq 8\(dist\(z,P_{xy}\)+\rho(x,y)\)\lesssim\delta_{H_{xy}}\left(z\right).
\end{align*}
Consequently, by upper bound in \eqref{eq:MS} and the estimate $\delta(x)\approx \delta_{H_{xy}}(x)$, we get
\begin{align*}
k(t/2,x,z)&\lesssim \left[\left(1\wedge\frac{\delta(x)\delta(z)}t\right)+\left(1\wedge\frac{\delta(x)\delta_{H_{xy}}(z)}t\right)\right]k(t/2,x,z)\\
&\lesssim \left(1\wedge\frac{\delta_{H_{xy}}(x)\delta_{H_{xy}}(z)}t\right)k(t/2,x,z)\approx k_{H_{xy}}(t/2,x,z),
\end{align*}
and the same inequality holds with $y$ instead of $x$. Thus, by Corollary \ref{cor:CKHH}, we obtain 
\begin{align*}
I_{2,2}&\lesssim \int_{H_{xy}\bs B_n\left(\frac{x+y}2,\,R(x,y)\right)}k_{H_{xy}}(t/2,x,z)k_{H_{xy}}(t/2,z,y)dz\\
&\lesssim k_{H_{xy}}(t,x,y)e^{-\delta\(\frac{x+y}2\)/2\sqrt t}\lesssim \(1\wedge\frac{\delta_{H_{xy}}(x)\delta_{H_{xy}}(y)}{t}\)k(t,x,y)e^{-\delta\(\frac{x+y}2\)/2\sqrt t}.
\end{align*}
Hence, since $\delta_{H_x}\(\frac{x+y}2\), \delta_{H_y}\(\frac{x+y}2\)\geq \delta\(\frac{x+y}2\)$ we may bound for $t<1$ and $\delta\(\frac{x+y}{t}\)/\sqrt t>1$ as follows
\begin{align}\nonumber
I_{2,2}&\lesssim \(1\wedge\frac{\delta_{H_{xy}}(x)}{\sqrt t}\)\(1\wedge\frac{\delta_{H_{xy}}(y)}{\sqrt t}\)k(t,x,y)e^{-\delta\(\frac{x+y}2\)/2\sqrt t}\\\label{eq:J22}
&\leq \(1\wedge\frac{\delta(x)\delta_{H_x}\(\frac{x+y}2\)}{t}\)\(1\wedge\frac{\delta(y)\delta_{H_y}\(\frac{x+y}2\)}{t}\)k(t,x,y)e^{-\delta\(\frac{x+y}2\)/2\sqrt t}.
\end{align}
Finally, combining \eqref{eq:aux4}, \eqref{eq:bigangle}, \eqref{eq:J21} and \eqref{eq:J22} we get
\begin{align*}\nonumber
&\Bigg|k_B(t,x,y) -\left(1-e^{-2\delta(x)\,\delta_{H_x}\left(\frac{x+y}2\right)/t}\right)\left(1-e^{-2\delta(y)\,\delta_{H_y}\left(\frac{x+y}2\right)/t}\right)k(t,x,y)\Bigg|\\
&\lesssim \sqrt{\frac{\sqrt t}{\delta\(\frac{x+y}2\)}}\left(1-e^{-2\delta(x)\,\delta_{H_x}\left(\frac{x+y}2\right)/t}\right)\left(1-e^{-2\delta(y)\,\delta_{H_y}\left(\frac{x+y}2\right)/t}\right)k(t,x,y),
\end{align*}
which is equivalent to the assertion of the theorem.
\end{proof}

\section{Proof of Theorem  \ref{thm:Thm2}}
 We start this section with another bound for difference   between  $k_B(t,x,y)$ and $k_{H_x}(t,x,y)$. The below-given lemma implies assertion of Theorem  \ref{thm:Thm2} with additional assumptions  that $\delta(y)$ tends to $\delta_{H_x}(y)$ and $\delta(x)<\delta(y)$, which seem to be quite natural when comparing  $k_B(t,x,y)$ with $k_{H_x}(t,x,y)$ for $x$ and $y$ close to each other and to the boundary of $B$.
\begin{lemma}\label{lem:2.1}
Let $t<1$ and $x,y\in B$ with $\delta(x)\leq\delta(y)$. Then
\begin{align*}
|k_B(t,x,y)-k_{H_x}(t,x,y)|&\lesssim \frac{\delta(x)\delta(y)}{t}k(t,x,y)\(\sqrt t+\frac{|x-y|^{2}}{\sqrt t}+\frac{\delta_{H_x}(y)-\delta(y)}{\delta(y)}\).
\end{align*}
\end{lemma}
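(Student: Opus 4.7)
Since $B\subset H_x$ with $\partial B\cap\partial H_x=\{x/|x|\}$ (a single point, hence of zero harmonic measure), the strong Markov property at the first exit time $\tau_B$ from the ball yields the Hunt-type identity
\begin{equation*}
k_{H_x}(t,x,y)-k_B(t,x,y)=\int_0^t\int_{\partial B} k_{H_x}(t-s,z,y)\,q^B_x(s,z)\,d\sigma(z)\,ds,
\end{equation*}
and since $k_B\leq k_{H_x}$ the difference is nonnegative, so the absolute value in the claim can be dropped and it suffices to bound the right-hand side.

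To estimate the integrand, I would combine the explicit formula \eqref{eq:kHform} with the inequality $1-e^{-a}\leq a$, giving $k_{H_x}(t-s,z,y)\leq \delta_{H_x}(z)\delta_{H_x}(y)(t-s)^{-1}k(t-s,z,y)$. The key geometric input is that $\partial B$ is tangent to $P_x$ at $x/|x|$, so for $z\in\partial B$ one has the exact identity $\delta_{H_x}(z)=\tfrac12|z-x/|x||^2$, and by the triangle inequality $\delta_{H_x}(z)\lesssim |z-x|^2+\delta(x)^2$. Splitting $\delta_{H_x}(y)=\delta(y)+(\delta_{H_x}(y)-\delta(y))$ immediately peels off the third error term $(\delta_{H_x}(y)-\delta(y))/\delta(y)$ once the overall prefactor $\delta(x)\delta(y)/t$ is extracted. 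What remains is to bound
\begin{equation*}
\int_0^t\!\!\int_{\partial B}\!\frac{(|z-x|^2+\delta(x)^2)\,\delta(y)}{t-s}\,k(t-s,z,y)\,q^B_x(s,z)\,d\sigma(z)\,ds\,\lesssim\,\frac{\delta(x)\delta(y)}{t}\!\left(\sqrt t+\frac{|x-y|^2}{\sqrt t}\right)k(t,x,y).
\end{equation*}

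I would then insert the estimate \eqref{eq:qest} for $q^B_x$. The dominant contribution $q^B_x(s,z)\lesssim(\delta(x)/s)\,k(s,x,z)$ reduces the problem to bounding surface integrals of $(|z-x|^2+\delta(x)^2)k(s,x,z)k(t-s,z,y)$ against Chapman--Kolmogorov-type estimates on $H_x$ (cf.\ Corollary \ref{cor:CKHH}). The $\delta(x)^2$ piece, after integration in $s$, supplies the $\sqrt t$ error (here the assumption $\delta(x)\leq\delta(y)$ is used to absorb an extra $\delta(x)$ into the prefactor $\delta(x)\delta(y)$), while the $|z-x|^2$ piece captures the second moment of a Brownian bridge through $\partial B$, which combines with the $s$-integration to produce the $|x-y|^2/\sqrt t$ error. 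The subdominant regime of $q^B_x$, where $|x-z|^2\gtrsim s/\delta(x)$, is exponentially suppressed by the Gaussian tail of $k(s,x,z)$ and contributes negligibly.

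The main obstacle is this last step: integrating over $\partial B$ with the $|z-x|^2$ weight while accurately tracking both regimes of $q^B_x$ and the $1/(s(t-s))$ singularity at the endpoints of the $s$-integration. The bridge--type surface estimates needed are precisely what the appendix lemmas are designed to provide, and the delicate matching of all three error terms against the single prefactor $\delta(x)\delta(y)/t$ is where the bulk of the technical bookkeeping lies.
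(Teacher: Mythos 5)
Your overall framework matches the paper's: strong Markov property (Hunt formula) for $B\subset H_x$, then estimate the remainder using \eqref{eq:qest}, \eqref{eq:kHest}, and the tangency identity $\delta_{H_x}(z)=\tfrac12|z-x/|x||^2$ for $z\in\partial B$. You run the strong Markov property from $x$ where the paper runs it from $y$ (equivalent by symmetry of the kernel), but this is a cosmetic variation. The substance of the proof, however, is missing, and in two places your outline misattributes the error terms.

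First, the claim that splitting $\delta_{H_x}(y)=\delta(y)+(\delta_{H_x}(y)-\delta(y))$ ``immediately peels off'' the third error is overstated. After the split you are left needing a bound of the form
\begin{equation*}
\int_{\partial B}\int_0^t\frac{\delta_{H_x}(z)}{s(t-s)}\Big(1+\tfrac{|x-z|^4}{s}\Big)k(s,x,z)k(t-s,z,y)\,ds\,d\sigma(z)\lesssim\frac1t\,k(t,x,y),
\end{equation*}
and this is false near $z\approx y/|y|$ without further work: the time integral produces a $1/\delta(y)$ blow-up there, which must be compensated by the weight $\delta_{H_x}(z)\approx\delta_{H_x}(y)-\delta(y)$ in that region. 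This is precisely the geometric argument \eqref{eq:1-z_n<}--\eqref{eq:aux11} and the estimate \eqref{eq:aux8} in the paper's treatment of $A_2$, and it is where the third error term actually comes from; it does not factor out at the level of $\delta_{H_x}(y)$. Second, your proposed source of the $\sqrt t$ error --- the ``$\delta(x)^2$ piece'' of $\delta_{H_x}(z)\lesssim|z-x|^2+\delta(x)^2$ --- cannot be right, because for $z\in\partial B$ one has $|z-x|\geq\delta(x)$, so $\delta(x)^2\leq|z-x|^2$ and that piece is always dominated. In the paper the $\sqrt t$ arises from exits on the far side of the ball ($r_1$) and from the far-field region $A_3$ of the surface integral, not from anything of order $\delta(x)^2$.

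More fundamentally, the proposal never carries out the key computation: the decomposition of the surface integral into the regions $A_1$ (near $x$), $A_2$ (near $y$), $A_3$ (far), the Gaussian-saddle estimate on $(|x-z|+|y-z|)^2-|x-y|^2$ using the projection $z'$, and above all the precise two-sided bound on the time convolution $\int_0^t s^{-\alpha}(t-s)^{-\beta}e^{-a^2/s-b^2/(t-s)}\,ds$ from Proposition \ref{prop:estints}, which is the technical heart of the lemma. You explicitly flag these steps as ``the main obstacle'' and defer them to ``appendix lemmas,'' so what you have is a plausible plan sharing the paper's scaffolding, not a proof; whether the plan closes depends exactly on the pieces you have omitted.
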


\begin{proof}
First, let us note that if $\delta_{H_x}(y)-\delta(y)\geq \frac1{100}\delta(y)$, then $\delta_{H_x}(y)\leq 101\(\delta_{H_x}(y)-\delta(y)\)$ and the assertion is obvious since, by \eqref{eq:kHest},
$$k_B(t,x,y)\leq k_{H_x}(t,x,y)\lesssim \frac{\delta(x)\delta_{H_x}(y)}{t}k(t,x,y)\leq 101\frac{\delta(x)\delta(y)}{t}k(t,x,y)\frac{\delta_{H_x}(y)-\delta(y)}{\delta(y)}.$$
Similarly, if $\delta_{H_x}(y)-\delta(y)< \frac1{100}\delta(y)$, then $\delta_{H_x}(y)<\frac{101}{100}\delta(y)$ and the assertion is clear for $|x-y|^2/\sqrt t\geq \frac14$ by
$$k_B(t,x,y)\leq k_{H_x}(t,x,y)\lesssim\frac{\delta(x)\delta_{H_x}(y)}{t}k(t,x,y)\leq \frac{101}{25}\frac{\delta(x)\delta(y)}{t}k(t,x,y)\frac{|x-y|^2}{\sqrt t} .$$
For these reasons, throughout the proof we assume
\begin{align}\label{eq:assumpinlemma}
\delta_{H_x}(y)-\delta(y)< \frac1{100}\delta(y),\hspace{10mm}\text{ and }\hspace{13mm} \frac{|x-y|^2}{\sqrt t}<\frac14.
\end{align}
Without loss of generality we also assume that $x=(0,...,0,x_n)$, $x_n>0$, and $y=(0,...,0,y_{n-1},y_n)$, $y_{n-1},y_n\geq0$. In particular,  this implies $H_x=\{z\in\R^n:z_n<1\}$. By Strong Markov property (or Hunt formula) we conclude
$$k_{H_{x}}(t,x,y)=k_B(t,x,y)+r(t,x,y),$$
where
\begin{align*}
r(t,x,y)&=\E^{y}\[\tau_{B}<t; k_{H_{x}}\(t-\tau_{B},W\(\tau_{B}\),x\)\]\\
&=\E^{y}\[\tau_{B}<t;W_n\(\tau_{B}\)<\frac12 ; k_{H_{x}}\(t-\tau_{B},W\(\tau_{B}\),x\)\]\\
&\ \ \ +\E^{y}\[\tau_{B}<t;W_n\(\tau_{B}\)\geq\frac12 ; k_{H_{x}}\(t-\tau_{B},W\(\tau_{B}\),x\)\]\\
&:=r_1(t,x,y)+r_2(t,x,y).
\end{align*}
Our aim is therefore to estimate $r(t,x,y)$. We start with  $r_1(t,x,y)$. First, let us note that for $x=(0,...,0,x_n)$ with $x_n\geq 0$ and $z\in\partial B$ such that $z_n<1/2$   it holds $|x-z|\geq \sqrt 2/2$.  Then, using the assumption $|x-y|^2\leq \frac14\sqrt t<\frac14$ and  the fact that the function $e^{-1/8s}/s^{n/2+1}$ may be estimated by an increasing function on the interval $(0,1)$,  we get for $s<t<1$ 
\begin{align*}
k_{H_{x}}\(s,z,x\)&\lesssim \frac{\delta(x)}{s^{1+n/2}}e^{-1/8s}\lesssim \frac{\delta(x)}{t^{1+n/2}}e^{-1/8t}\leq \frac{\delta(x)}t\frac{e^{-|x-y|^2/4t}}{t^{n/2}}e^{-1/16t}\lesssim{\delta(x)}k(t,y,x)e^{-1/16t},
\end{align*}
and consequently
\begin{align}\nonumber
r_1(t,x,y)&\lesssim \frac{\delta(x)}tk(t,x,y)e^{-1/16t}\E^{y}\[W_n\(\tau_{B}\)<\frac12 \]\\\label{eq:r_1}
&\lesssim \frac{\delta(y)\delta(x)}tk(t,x,y)e^{-1/16t}\lesssim \frac{\delta(y)\delta(x)}tk(t,x,y)\sqrt t,
\end{align}
where we used  the estimate $\[W_1\(\tau_{B}\)<\frac12 \]\lesssim \delta(x)$, which   follows e.g. from the formula for the Poisson kernel of a ball.   

Let us now deal with $r_2(t,x,y)$. Denoting $\tilde z=(z_1,z_2,...,z_{n-1})\in\R^{n-1}$ we have for $z\in \partial B\cap\{z_n\geq0\}$
$$\delta_{H_x}(z)=1-\sqrt{1-|\tilde z|^2}=\frac{|\tilde z|^2}{1+\sqrt{1-|\tilde z|^2}}\leq |\tilde z|^2.$$
Hence, using \eqref{eq:qest} and  \eqref{eq:kHest}, we obtain
\begin{align*}
r_2(t,x,y)&=\int_{z\in\partial B: z_n\geq\frac12}\int_0^t q_y(s,z) k_{H}(t-s,z,x)ds\,dz\\
&\lesssim\int_{z\in\partial B:z_n\geq\frac12}\int_0^t \frac{\delta(y)}s\(1+\frac{|y-z|^4}s\)k(t,y,z)\frac{\delta(x)\delta_{H_x}(z)}{t-s} k(t-s,z,x)ds\,dz.\\
&\lesssim\delta(x)\delta(y)\int_{z\in\partial B:z_n\geq\frac12}|\tilde z|^2\int_0^t \(1+\frac{|y-z|^4}s\)\frac{\exp\(-\frac{|y-z|^2}{4s}-\frac{|x-z|^2}{4(t-s)}\)}{s^{\frac n2+1}(t-s)^{\frac n2+1}} ds\,dz.
\end{align*}
Splitting the inner integral into two by separating summands in  the factor $\(1+\frac{|y-z|^4}s\)$, and then applying Proposition \ref{prop:estints} with $\alpha=\beta=\frac n2+1$ and $\alpha=\frac n2+2$, $\beta=\frac n2+1$, respectively, we get
\begin{align*}
&r_2(t,x,y)\lesssim \\
&\int_{z\in\partial B:z_n\geq\frac12}\delta(x)\delta(y)| \tilde z|^2e^{-\frac{(|x-z|+|z-y|)^2}{4t}}\Bigg(\frac{\(\frac t{|y-z|^2}\)^{n/2}(1+|y-z|^2)+\(\frac t{|z-x|^2}\)^{n/2}\(1+\frac{|y-z|^4}t\)}{t^{n+1}}\\
&\ \ \  +\frac1{t^{n+1}}\frac{(|x-z|+|y-z|)^{n}}{|x-z|^{n/2}|y-z|^{n/2}}\(1+\frac{(|x-z|+|z-y|)|y-z|^3}{t}\)\sqrt{\frac{t}{t+|x-z||y-z|}}\Bigg)\,dz\\
&=\int_{A_1}...dz +\int_{A_2}...dz +\int_{A_3}...dz=:I_1+I_2+I_3, 
\end{align*}
where,
\begin{align*}
A_1&=\{z\in \partial B: |z-x|\leq 2|x-y|, |z-y|>\frac18|x-y|\},\\
A_2&=\{z\in \partial B: |z-y|\leq\frac18|x-y|\},\\ 
A_3&=\{z\in\partial B:z_n\geq1/2, |z-x|>2|x-y|\}.
\end{align*}
It is clear, that for $z\in A_3$ we have $|x-z|\approx |y-z|$ and $(|x-z|+|z-y|)^2\geq |x-z|^2+|z-y|^2\geq |x-z|^2+|x-y|^2$, thus
$$
I_3\lesssim\delta(x)\delta(y)\frac{e^{-|x-y|^2/4t}}{t^{n/2+1}}\int_{A_3}| \tilde z|^2e^{{-|x-z|^2/4t}}\(1+\frac{|x-z|^4}{t}\)\Bigg(\frac{1}{|x-z|^{n}} +\frac{1}{t^{n/2}}\Bigg)\,dz.$$
Furthermore, we estimate $|x-z|\geq  |\tilde z|$ and
$$e^{{-|x-z|^2/4t}}\(1+\frac{|x-z|^4}{t}\)\leq e^{{-|x-z|^2/8t}}\(1+|x-z|^2\sup_{v\in(0,\infty)}\{ve^{v/8}\}\)\lesssim e^{{-|x-z|^2/8t}},$$
 which yields
$$
I_3\lesssim\delta(x)\delta(y)\frac{e^{-|x-y|^2/4t}}{t^{n/2+1}}\int_{A_3}e^{{-|\tilde z|^2/8t}}\Bigg(\frac{1}{|\tilde z|^{n-2}} +\frac{|\tilde z|^2}{t^{n/2}}\Bigg)\,dz.$$
Next, we parametrize $A_3$ by $B_{n-1}\({0,\sqrt3/2}\)\ni v\longrightarrow (v,\sqrt{1-|v|^2})$. Since surface element of this mapping is bounded, we obtain
\begin{align}\nonumber
I_3&\lesssim \delta(x)\delta(y)\frac{e^{-|x-y|^2/4t}}{t^{n/2+1}}\int_{B_{n-1}\({0,\sqrt3/2}\)}e^{{-|v|^2/8t}}\Bigg(\frac{1}{|v|^{n-2}} +\frac{|v|^2}{t^{n/2}}\Bigg)\,dv\\\nonumber
&=\sqrt  t \, \delta(x)\delta(y)\frac{e^{-|x-y|^2/4t}}{t^{n/2+1}}\int_{B_{n-1}\({0,\sqrt3/2\sqrt t}\)}e^{{-|w|^2/8}}\Bigg(\frac{1}{|w|^{n-2}} +|w|^2\Bigg)\,dw\\\label{eq:I3}
&\approx \sqrt t\,\frac{\delta(x)\delta(y)}{t}k(t,x,y).
\end{align}
Let us pass to estimating  integrals $I_1$ and $I_2$. The latter assumption in \eqref{eq:assumpinlemma} gives us for $z\in A_1\cup A_2$ 
$$1+\frac{|y-z|^4}{t}, 1+\frac{(|x-z|+|z-y|)|y-z|^3}{t}\lesssim1+\frac{|x-y|^4}{t}\approx 1.$$
Furthermore, let $z'$ be an orthogonal projection of $z$ onto the line containing $x$ and $y$. Then, we have
\begin{align*}
(|x-z|+|y-z|)^2&=|x-z'|^2+|y-z'|^2+2|z-z'|^2+2|x-z||y-z|\\
&=(|x-z'|+|y-z'|)^2-2|x-z'||y-z'|+2|x-z||y-z|+2|z-z'|^2\\
&\geq |x-y|^2+2\(|x-z||y-z|-|x-z'||y-z'|\)\\
&= |x-y|^2+2\(\sqrt{|x-z'|^2+|z-z'|^2}\sqrt{|y-z'|^2+|z-z'|^2}-|x-z'||y-z'|\)\\
&= |x-y|^2+2\frac{|z-z'|^2\(|x-z'|^2+|y-z'|^2\)+|z-z'|^4}{\sqrt{|x-z'|^2+|z-z'|^2}\sqrt{|y-z'|^2+|z-z'|^2}+|x-z'||y-z'|}\\
&\geq |x-y|^2+\frac{|z-z'|^2\(|x-z'|^2+|y-z'|^2\)}{\sqrt{|x-z'|^2+|z-z'|^2}\sqrt{|y-z'|^2+|z-z'|^2}}\\
&\geq |x-y|^2+\frac14\frac{|z-z'|^2|x-y|^2}{|x-z||y-z|}.
\end{align*} 
Hence, both of the integrands in $I_1$ and $I_2$ are bounded (up to a multiplicative constant) by
\begin{align}\label{eq:integrand12}
&\frac{\delta(x)\delta(y)}{t}k(t,x,y)\exp\({-\frac1{16t}\frac{|z-z'|^2|x-y|^2}{|x-z||y-z|}}\)\\\nonumber
&\times| \tilde z|^2\Bigg(\frac 1{(|x-z|\wedge|y-z|)^n} +\(\frac{|x-y|^2}{t}\)^{n/2}\frac{\sqrt t}{\(|y-z||z-x|\)^{(n+1)/2}}\Bigg),
\end{align}
where we also bounded $\sqrt{t+|x-z||y-z|}>\sqrt{|x-z||y-z|)}$.

For $z\in A_1$ we have $|y-z|\approx |x-y|$. Furthermore, due to the assumed form of $x$ and $y$, we have  $z'=(0,...,0,z_{n-1}, z_n)$, and 
therefore $|z-z'|\geq|(z_1,...,z_{n-2})|$. Hence, estimating additionally $|x-z|\approx |\tilde z|+\delta(x)$, we get for $z\in A_1$
\begin{align}\label{eq:fractions}
\frac{|z-z'|^2|x-y|^2}{|x-z||y-z|}\gtrsim\frac{|z-z'|^2|x-y|}{|x-z|}\gtrsim \frac{|(z_1,...,z_{n-2})|^2|x-y|}{|(z_1,...,z_{n-2})|+|z_{n-1}|+\delta(x)}.
\end{align}
All together  leads to
\begin{align*}
I_1\lesssim \frac{\delta(x)\delta(y)}{t}k(t,x,y)\int_{A_1}&e^{-\frac{c}{t}\frac{|(z_1,...,z_{n-2})|^2|x-y|}{|(z_1,...,z_{n-2})|+|z_{n-1}|+\delta(x)}}\Bigg(\frac1{|\tilde z|^{n-2}} +\(\frac{|x-y|}t\)^{(n-1)/2}\frac{1}{|\tilde z|^{(n-3)/2}}\Bigg)dz,
\end{align*}
for some constant $c>0$. Next, we parametrize $A_1$ by
 $$ S_1\ni (v,w)\longrightarrow (v,w,\sqrt{1-|(v,w)|^2})\in A_1,$$
for some $S_1\subset B_{n-2}(0,2|x-y|)\times (-2|x-y|,2|x-y|)$. This gives us
 \begin{align}\label{eq:I1.1}
 I_1&\lesssim \frac{\delta(x)\delta(y)}{t}k(t,x,y)\int_{B_{n-2}(0,2|x-y|)}\\\nonumber
&\ \ \ \times\int_{-2|x-y|}^{2|x-y|}\frac1{(|v|+|w|)^{n-2}} +\(\frac{|x-y|}t\)^{(n-1)/2}\frac{e^{-\frac ct\frac{|v|^2|x-y|}{|v|+|w|+\delta(x)}}}{(|v|+|w|+\delta(x))^{(n-3)/2}}dw\,dv,
\end{align}
where we also omitted the exponent  in the first fraction. We deal with the summands of the integrand separately. First, 
 \begin{align}\label{eq:I11}
 &\int_{B_{n-2}(0,2|x-y|)}\int_{-2|x-y|}^{2|x-y|}\frac{dw\,dv}{(|v|+|w|)^{n-2}}\leq \int_{B_{n-1}(0,4|x-y|)}\frac1{|z|^{n-2}}dz\approx |x-y|.
\end{align}
In case of the latter integral we consider dimension $n=2$ separately. Precisely,  for $n=2$ we obtain just a single integral 
\begin{align*}
\(\frac{|x-y|}t\)^{1/2}\int_{-2|x-y|}^{2|x-y|}{|w|^{1/2}}dw\,dv\lesssim \frac{|x-y|^2}{\sqrt t}.
\end{align*} 
For $n\geq 3$ we will use the inequality $\delta(x)\leq 2|x-y|$, which is true  since else $A_1\cup A_2=\phi$ and $I_1=I_2=0$. Then, for $|v|\leq 2|x-y|$
 \begin{align}\nonumber
  \int_{-2|x-y|}^{2|x-y|}\frac{e^{-\frac ct\frac{|v|^2|x-y|}{|v|+|w|+\delta(x)}}}{(|v|+|w|+\delta(x))^{(n-3)/2}}dw
  &=2\int_{|v|+\delta(x)}^{|v|+2|x-y|+\delta(x)}\frac{e^{-\frac ct\frac{|v|^2|x-y|}{w}}}{w^{(n-3)/2}}dw\leq 2\int_{|v|}^{6|x-y|}\frac{e^{-\frac ct\frac{|v|^2|x-y|}{w}}}{w^{(n-3)/2}}dw\\\nonumber
   &=2\(\frac{t}{|v|^2|x-y|}\)^{(n-5)/2}\int^{|v||x-y|/t}_{|v|^2/6t}{e^{-c\,u}}u^{(n-7)/2}du\\\nonumber
    &\leq 2\(\frac{t}{|v|^2|x-y|}\)^{(n-5)/2}e^{-c|v|^2/6t}\int_{0}^{\infty}{e^{-c\,u}}\(u+\frac{|v|^2}{6t}\)^{(n-7)/2}du\\\nonumber
    &\lesssim \(\frac{t}{|v|^2|x-y|}\)^{(n-5)/2}e^{-c|v|^2/6t}\frac{\(\frac{|v|^2}{t}\)^{(n-5)/2}}{1\vee \frac{|v|^2}{t}}\\\label{eq:intdw}
&\approx  \frac{e^{-c|v|^2/6t}}{|x-y|^{(n-5)/2}\(1+\frac{|v|^2}{t}\)}.
\end{align}
This follows
\begin{align*}
&\(\frac{|x-y|}t\)^{(n-1)/2}\int_{B_{n-2}(0,2|x-y|)}\int_{-2|x-y|}^{2|x-y|}\frac{e^{-\frac ct\frac{|v|^2|x-y|}{|v|+|w|+\delta(x)}}}{(|v|+|w|)^{(n-3)/2}}dw\,dv\\
&\lesssim \frac{|x-y|^2}{t^{(n-1)/2}}\int_{\R^{n-2}}\frac{e^{-c|v|^2/6t}}{1+\(\frac{|v|^2}{t}\)}\,dv=\frac{|x-y|^2}{\sqrt t}\int_{\R^{n-2}}\frac{e^{-c|v|^2/6}}{1+{|v|^2}}\,dv\\
&\approx \frac{|x-y|^{2}}{\sqrt t},
\end{align*}
which is the same bound as in the case $n=2$. 
Applying this and  \eqref{eq:I11}  to \eqref{eq:I1.1}, we  arrive at
\begin{align}\label{eq:I1}
I_1\lesssim \frac{\delta(x)\delta(y)}{t}k(t,x,y)\(|x-y|+\frac{|x-y|^{2}}{\sqrt t}\).
\end{align}
Estimation of $I_2$ is very similar to estimation of $I_1$ at many points. Nevertheless, some additional preparation is needed. If $\delta(y)\geq |x-y|/8$, then clearly $A_2=\phi$ and consequently $I_2=0$, so we assume $\delta(y)< |x-y|/8$.  Furthermore,  it happens that 
\begin{align}\label{eq:tildeyvs|x-y|}
\frac12|x-y|\leq |\tilde y|=y_{n-1}\leq|x-y|.
\end{align}
The latter inequality follows directly from the form of $x$ and $y$. To see the first one, assume it is not true i.e. $|\tilde y|<|x-y|/2$. Then, due to the inequality $y_n\leq x_n$, that follows from the assumption $\delta(x)\leq \delta(y)$, we get
$$y_n= x_n-\sqrt{|x-y|^2-|\tilde y|^2}\leq 1-\frac{\sqrt 3}2|x-y|,$$
and hence
$$\delta(y)\geq \sqrt{1-|\tilde y|^2}-y_n\geq \sqrt{1-\frac14|x-y|^2}-1+\frac{\sqrt 3}2|x-y|\geq -\frac14|x-y|^2+\frac{\sqrt 3}2|x-y|\geq \frac{\sqrt 3}4|x-y|,$$
which contradicts the assumption $\delta(x)<|x-y|/8$. Therefore, from \eqref{eq:tildeyvs|x-y|} we have
\begin{align}\label{eq:aux11}
\delta_{H_x}(y)-\delta(y)\geq 1-\sqrt{1-|\tilde y|^2}\geq \frac12|\tilde y|^2\geq \frac18|x-y|^2,
\end{align}
which, in view of the assumption $\delta_{H_x}(y)-\delta(y)\leq\frac1{100} \delta(y)$, gives
\begin{align}\label{eq:|x-y|2<delta(y)}
|x-y|^2\leq \frac8{100}\delta(y).
\end{align}
Next, we will show that 
\begin{align}\label{eq:z-z'}
|z-z'|\geq \frac12|(z_1,...,z_{n-2})|+\frac1{16}\delta(y).
\end{align}
From the definition of $A_2$, \eqref{eq:tildeyvs|x-y|} and the form of $y$,  we have for $z\in A_2$
\begin{align}\label{eq:z<deltay}
\frac38|x-y|<|\tilde z|<\frac98|x-y|,
\end{align}
and hence, by \eqref{eq:|x-y|2<delta(y)},
\begin{align}\label{eq:1-z_n<}
(1-z_n)=1-\sqrt{1-|\tilde z|^2}\leq |\tilde z|^2<\frac{81}{64}|x-y|^2<\frac18\delta(y).
\end{align}
Additionally, since $|z-z'|<|z-y|<\frac18|x-y|$ and by   \eqref{eq:tildeyvs|x-y|} 
we get
\begin{align}
\label{eq:z'_{n-1}>}
z'_{n-1}&>y_{n-1}-|y-z|-|z-z'|>\frac14|x-y|.
\end{align}
Then, since every point $z'$ is of the form $\(0,...,0,z'_{n-1},\frac{y_n-x_n}{y_{n-1}}z'_{n-1}+x_n \)$, we conclude from  \eqref{eq:z'_{n-1}>}, \eqref{eq:tildeyvs|x-y|} and the inequality $y_n\leq x_n$ what follows
\begin{align*}
1-z'_n&= 1-\(\frac{y_n-x_n}{y_{n-1}}z'_{n-1}+x_n\)\geq 1-\(\frac{y_n-x_n}{y_{n-1}}\frac14|x-y|+x_n\)\\
& = (1-y_n)\frac{|x-y|}{4 y_{n-1}}+(1-x_n)\(1-\frac{|x-y|}{4 y_{n-1}}\)\\
&\geq \frac14(1-y_n)\geq \frac14 \delta(y),
\end{align*}
which, combined with \eqref{eq:1-z_n<}, leads to
\begin{align*}
|z_n-z_n'|=(1-z_n')-(1-z_n)\geq \frac18\delta(y).
\end{align*}
Thus, the inequality $|z-z'|\geq \frac12|(z_1,...,z_{n-2})|+\frac12|z_n-z_n'|$ implies \eqref{eq:z-z'}. Hence we get
\begin{align*}
\frac{|z-z'|^2|x-y|^2}{|x-z||y-z|}\gtrsim\frac{|z-z'|^2|x-y|}{|y-z|}\gtrsim \frac{\big(|(z_1,...,z_{n-2})|+(\delta(y))\big)^2|x-y|}{|y-z|}.
\end{align*}
Applying this, the bound  $|\tilde z|^2\lesssim \delta_{H_x}(y)-\delta(y)$, which follows   from \eqref{eq:1-z_n<} and \eqref{eq:aux11}, and the estimates 
\begin{align*}
|x-z|\approx |x-y|, \hspace{10mm}
|y-z|\approx  \delta(y)+\left|\tilde z-\frac{\tilde y}{|y|}\right|,\hspace{20mm} z\in A_2,
\end{align*}
to \eqref{eq:integrand12}, we arrive at

\begin{align*}
I_2&\lesssim \frac{\delta(x)\delta(y)}tk(t,x,y)(\delta_{H_x}(y)-\delta(y))\\
&\ \ \ \times\int_{A_2}e^{-\frac{c}{t}\frac{\big(|(z_1,...,z_{n-2})|+(\delta(y))\big)^2|x-y|}{\left|\tilde z-\frac{\tilde y}{|y|}\right|+\delta(y)}}\Bigg(\frac1{\(\left|\tilde z-\frac{\tilde y}{|y|}\right|+\delta(y)\)^{n}} +\frac{\(\frac{|x-y|}t\)^{(n-1)/2}}{\(\left|\tilde z-\frac{\tilde y}{|y|}\right|+\delta(y)\)^{(n+1)/2}}\Bigg)dz.
\end{align*}
Similarly as in the case of $I_1$, we parametrize $A_2$ by
 $$ S_2\ni (v,w)\longrightarrow z=\(\frac{\tilde y}{|y|}\)+(v,w,\sqrt{1-\left|(v,w)+\frac{\tilde y}{|y|}\right|^2})\in A_2,$$
 for some $S_2\subset B_{n-2}(0,|x-y|/2)\times (-|x-y|/2,|x-y|/2)$. Since 
 $$|v|+|w|\geq |(v,w)|\geq \frac12(|v|+|w|),$$
  we get
 \begin{align}\label{eq:aux9}
I_2&\lesssim \frac{\delta(x)\delta(y)}tk(t,x,y)(\delta_{H_x}(y)-\delta(y))\int_{B_{n-2}(0,|x-y|/2)}\\\nonumber
&\ \ \ \ \ \ \times\int_{-|x-y|/2}^{|x-y|/2}\frac1{(|v|+|w|+\delta(y))^{n}} +\(\frac{|x-y|}t\)^{(n-1)/2}\frac{e^{-\frac ct\frac{\big(|v|+(\delta(y))\big)^2|x-y|}{|v|+\delta(y)+|w|}}}{(|v|+|w|+\delta(y))^{(n+1)/2}}dwdv.
\end{align}
We estimate the integral of the first summand as follows
\begin{align}\nonumber
&\int_{B_{n-2}(0,|x-y|/2)}e^{-{|v||x-y|}{/32t}}\int_{-|x-y|/2}^{|x-y|/2}\frac1{(|v|+|w|+\delta(y))^{n}}dwdv\\[8pt]
&\hspace{2cm}\lesssim \int_{B_{n-1}(0,|x-y|)}\frac{dz}{\(|z|+\delta(y)\)^{n}}\approx \frac1{\delta(y)}.
\label{eq:aux8}
\end{align}
Next, using \eqref{eq:intdw} with $|v|+\delta(y)$ and $n+4$ instead of $|v|$ and $n$ (note that assumptions are satisfied as $|v|+\delta(y)<\frac58|x-y|$ ), respectively, we obtain 
\begin{align*}
\int_{-|x-y|/2}^{|x-y|/2}\frac{e^{-\frac ct\frac{\big(|v|+(\delta(y))\big)^2|x-y|}{|v|+\delta(y)+|w|}}}{(|v|+|w|+\delta(y))^{(n+1)/2}}dw&\lesssim \frac{e^{-c\big(|v|+\delta(y)\big)^2/4t}}{|x-y|^{(n-1)/2}\(1+\frac{(|v|+\delta(y))^2}{t}\)}\\
&\leq \frac{e^{-c|v|^2/4t}e^{-c(\delta(y))^2/4t}}{|x-y|^{(n-1)/2}\(1+\frac{|v|^2}{t}\)},
\end{align*}
and consequently
\begin{align*}
&\(\frac{|x-y|}t\)^{(n-1)/2}\int_{B_{n-2}(0,|x-y|/2)}\int_{-|x-y|/2}^{|x-y|/2}\frac{e^{-\frac ct\frac{\big(|v|+(\delta(y))\big)^2|x-y|}{|v|+\delta(y)+|w|}}}{(|v|+|w|+\delta(y))^{(n+1)/2}}dwdv\\[8pt]
&\lesssim \frac{e^{-c(\delta(y))^2/4t}}{t^{(n-1)/2}}\int_{\R^{n-2}}\frac{e^{-c|v|^2/4t}}{1+\frac{|v|^2}{t}}dv=\frac{1}{\delta(y)}\(\frac{\delta(y)}{t}e^{-c(\delta(y))^2/4t}\)\int_{\R^{n-2}}\frac{e^{-c|v|^2/4}}{1+{|v|^2}}dv\\
&\lesssim \frac{1}{\delta(y)},
\end{align*}
which is the same bound as in \eqref{eq:aux8}. Applying this to \eqref{eq:aux9}, we obtain
\begin{align}\label{eq:I2}
I_2\lesssim \frac{\delta(x)\delta(y)}tk(t,x,y)\frac{\delta_{H_x}(y)-\delta(y)}{\delta(y)}.
\end{align}
Now,  combining \eqref{eq:r_1}, \eqref{eq:I3}, \eqref{eq:I1} and \eqref{eq:I2} gives us
 \begin{align*}
&|k_B(t,x,y)-k_{H_x}|\lesssim \frac{\delta(x)\delta(y)}{t}k(t,x,y)\(\sqrt t+|x-y|+\frac{|x-y|^{2}}{\sqrt t}+\frac{\delta_{H_x}(y)-\delta(y)}{\delta(y)}\),
\end{align*}
and the inequalities
$$\sqrt t+|x-y|+\frac{|x-y|^{2}}{\sqrt t}\leq \(t^{1/4}+\frac{|x-y|}{t^{1/4}}\)^2\leq 2\(\sqrt t+\frac{|x-y|^{2}}{\sqrt t}\)$$ 
end the proof.
\end{proof}

The next lemma shows that, under assumptions of Theorem \ref{thm:Thm2}, the heat kernel $k_B(t,x,y)$ may be approximated by $k_{H_{xy}}(t,x,y)$.

\begin{lemma}\label{lem:2.2}
There are  constants $c_1,c_2>0$ such that for $t<c_1$ and $\frac{\delta\(\frac{x+y}2\)}{\sqrt t}<c_2$ we have
$$\left|k_B(t,x,y)-k_{ H_{xy}}(t,x,y)\right|\lesssim \(\sqrt t+\sqrt{\frac{\delta\(\frac{x+y}{2}\)}{t^{1/2}}}\)k_{B}(t,x,y),\ \ \ \ x,y\in B.$$
\end{lemma}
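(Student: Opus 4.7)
The plan is to approximate $k_B$ by $k_{H_{xy}}$ when $\tfrac{x+y}{2}$ lies within $c_2\sqrt{t}$ of $\partial B$ and $t<c_1$. First I would extract geometric consequences: by \eqref{eq:parallel}, $\delta(x),\delta(y),|x-y|^2\lesssim \delta\bigl(\tfrac{x+y}{2}\bigr)$, and by \eqref{eq:x_0-y_0} the half-angle $\alpha:=\tfrac12\bigl|\tfrac{x}{|x|}-\tfrac{y}{|y|}\bigr|$ satisfies $\alpha^2\lesssim \delta\bigl(\tfrac{x+y}{2}\bigr)$. Choosing coordinates so that $x/|x|$ and $y/|y|$ are symmetric about the first axis, a direct computation yields $\delta_{H_{xy}}(x)=\delta(x)\cos\alpha$ and $\delta_{H_{xy}}(y)=\delta(y)\cos\alpha$, so both agree with $\delta(x),\delta(y)$ up to a factor $1+O(\alpha^2)$. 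Moreover, the cap $B\setminus H_{xy}$ has height at most $1-\sqrt{1-\alpha^2}\lesssim \delta\bigl(\tfrac{x+y}{2}\bigr)$.

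The principal difficulty is that $B\not\subset H_{xy}$: the cap $B\setminus H_{xy}$ obstructs the clean strong-Markov decomposition used in Lemma \ref{lem:2.1}, which relied on $B\subset H_x$. I would proceed via the intermediate set $V:=B\cap H_{xy}$, contained in both $B$ and $H_{xy}$, and write
\begin{equation*}
k_B(t,x,y)-k_{H_{xy}}(t,x,y)=\bigl(k_B(t,x,y)-k_V(t,x,y)\bigr)-\bigl(k_{H_{xy}}(t,x,y)-k_V(t,x,y)\bigr).
\end{equation*}
Applying the strong Markov property at $\tau_V$, the first difference becomes an integral over exits $W(\tau_V)\in P_{xy}\cap B$ weighted by $k_B(t-\tau_V,W(\tau_V),x)$ (since paths exiting through $\partial B\cap H_{xy}$ leave $B$ as well, contributing neither to $k_V$ nor to $k_B$); the second is an integral over exits through $\partial B\cap H_{xy}$ weighted by $k_{H_{xy}}(t-\tau_V,W(\tau_V),x)$ (the $P_{xy}\cap B$ exits lie on $\partial H_{xy}$ and give zero). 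Smallness of the first term comes from the Gaussian cost of reaching $P_{xy}\cap B$ starting from $y$, combined with the exit-density estimate from a nearly-smooth piece of $\partial V$; smallness of the second from $\delta_{H_{xy}}(z)\lesssim \delta\bigl(\tfrac{x+y}{2}\bigr)$ for $z\in \partial B\cap H_{xy}$, which together with \eqref{eq:kHest} forces the remaining half-space kernel to be small.

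A possibly cleaner route, which I expect the author actually follows, combines Lemma \ref{lem:2.1} with an explicit comparison of the two half-space kernels via \eqref{eq:kHform}:
\begin{equation*}
k_{H_x}(t,x,y)-k_{H_{xy}}(t,x,y)=\bigl(e^{-\delta_{H_{xy}}(x)\delta_{H_{xy}}(y)/t}-e^{-\delta(x)\delta_{H_x}(y)/t}\bigr)k(t,x,y),
\end{equation*}
whose exponent difference is $O(\alpha^2\delta(x)/t)$. Combined with Lemma \ref{lem:2.1} and with $k_B\gtrsim \tfrac{\delta(x)\delta(y)}{t}k(t,x,y)$ from \eqref{eq:htxy:est}, this produces the desired bound after a Chapman--Kolmogorov cleanup analogous to that in the proof of Theorem \ref{thm:Thm1}. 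The main technical obstacle in either route is the anisotropic term $(\delta_{H_x}(y)-\delta(y))/\delta(y)$ of Lemma \ref{lem:2.1}, which may fail to be small exactly when $\alpha^2\gg \delta(y)$; the half-space difference above is designed to cancel this divergence, which explains why the symmetric object $H_{xy}$ (rather than $H_x$) is the correct approximating half-space in the regime $\delta\bigl(\tfrac{x+y}{2}\bigr)\ll\sqrt{t}$.
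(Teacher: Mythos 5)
Your second route is the one the paper actually follows in spirit (Lemma~\ref{lem:2.1} plus a Chapman--Kolmogorov decomposition plus a comparison of half-space kernels via \eqref{eq:kHform}), and you correctly identify the obstruction: the term $(\delta_{H_x}(y)-\delta(y))/\delta(y)$ in Lemma~\ref{lem:2.1} is of order $\alpha^2/\delta(y)$, which is unbounded when $\delta(y)\ll\alpha^2$. But the resolution you propose --- that the half-space difference $k_{H_x}-k_{H_{xy}}$ ``cancels'' this divergence --- does not work, because Lemma~\ref{lem:2.1} gives only a one-sided \emph{absolute-value} bound $|k_B-k_{H_x}|\lesssim\cdots$, and the explicit difference $k_{H_x}-k_{H_{xy}}$ is of the same order $\delta(x)\alpha^2 k(t,x,y)/t$. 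Two error terms of the same magnitude, one of which is only known in absolute value, cannot be made to cancel; you would need a signed asymptotic expansion of $k_B-k_{H_x}$ at that precision, which Lemma~\ref{lem:2.1} does not provide.

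The paper's actual trick is to avoid applying Lemma~\ref{lem:2.1} at the endpoints $x,y$ altogether. It decomposes $k_B(t,x,y)=\int k_B(t/2,x,z)k_B(t/2,z,y)\,dz$ and introduces a slab $C_1$ of thickness $d=t^{1/4}\sqrt{\delta((x+y)/2)+t}$ around $P_{xy}$ and a ball $C_2=B_n((x+y)/2,R)$ with $R=\sqrt{d\sqrt t}$. The key region is $C_2\setminus C_1$, on which $\delta(z)\geq d/2$; since $d\gg\delta((x+y)/2)\gtrsim\alpha^2$, the dangerous ratio $(\delta_{H_x}(z)-\delta(z))/\delta(z)$ is now small \emph{at the intermediate point $z$}, not at $y$, and Lemma~\ref{lem:2.1} becomes applicable with a good error. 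The contributions from $C_1$ and from outside $C_2$ are handled by crude upper bounds (using \eqref{eq:MS}, \eqref{eq:kHest}, Proposition~\ref{prop:CKHB}, and Corollary~\ref{cor:CKRR}). The final step --- comparing $k_{H_x}(t/2,x,z)k_{H_y}(t/2,z,y)$ with $k_{H_{xy}}(t/2,x,z)k_{H_{xy}}(t/2,z,y)$ --- again uses that $z$ is at distance $\gtrsim d$ from the boundary together with the angle estimate \eqref{eq:tan}. This ``route through a point that stands off the boundary by the geometric-mean scale $d$'' is the missing idea in your sketch; without it the stated bound does not follow.

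Your first route (strong Markov at $\tau_V$ with $V=B\cap H_{xy}$) is genuinely different but, as sketched, has a gap in the estimate of $k_B(t,x,y)-k_V(t,x,y)$: you attribute its smallness to ``the Gaussian cost of reaching $P_{xy}\cap B$ from $y$,'' but $y$ lies at distance $\delta_{H_{xy}}(y)\approx\delta(y)\ll\sqrt t$ from $P_{xy}$, so there is no Gaussian suppression for that exit. One would instead have to exploit that the relevant portion of $P_{xy}\cap B$ at which the process can land is a thin annulus near $\partial B$ and that the returning kernel $k_B(t-\tau_V,W(\tau_V),x)$ carries an extra factor of $\delta(W(\tau_V))/\!\sqrt{t-\tau_V}$ which is small there; this is not trivial and essentially recreates the work done in estimating $r_2$ in Lemma~\ref{lem:2.1}. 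As it stands the argument is not complete.
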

\begin{proof}
First, let us observe that for small values of $t$ and $\frac{\delta\(\frac{x+y}2\)}{\sqrt t}$ the angle $\angle(x,y)$ between vectors $x$ and $y$ is small as well and therefore $\delta_{H_{xy}}(x)\approx \delta(x)$ and $\delta_{H_{xy}}(y)\approx \delta(y)$. Furthermore, under this assumption  formulas \eqref{eq:parallel}, \eqref{eq:MS} and \eqref{eq:kHform} imply
\begin{align}\label{eq:kapproxk}
k_B(t,x,y)\approx k_{H_{xy}}(t,x,y)\approx \frac{\delta(x)\delta(y)}{t}k(t,x,y).
\end{align}
Let us introduce sets $C_1, C_2$, depending on $x$ and $y$, as follows:
\begin{align*}
C_1&=\left\{z\in B: dist(z,P_{xy})\leq d\right\},\\
C_2&=\left\{z\in B: \left|z-\frac{x+y}{2}\right|\leq R\right\},
\end{align*}
where $P_{xy}=\partial H_{xy}$  and
\begin{align*}
d&=d(x,y):=\sqrt{\(\delta\(\frac{x+y}{2}\)+t\)\sqrt t}=t^{1/4}\sqrt{\delta\(\frac{x+y}{2}\)+t},\\
R&=R(x,y):=\sqrt{d\sqrt t}=t^{3/8}\sqrt[4]{\delta\(\frac{x+y}{2}\)+t}.
\end{align*}
Such a choice of $d$ and $R$ ensures that
\begin{align*} \frac{\delta\(\frac{x+y}{2}\)}d,\ \ \frac{d}{\sqrt t},\ \   \frac{\sqrt t}R,\ \  \frac{R}{t^{1/4}},\ \ \frac{d}{R}, \ \  \frac{R^2}{d}\rightarrow0,\ \ \ \ \ \ \ \ \ \ \text{as}\ \ t, \frac{\delta\(\frac{x+y}2\)}{\sqrt t}.
\end{align*}
In particular, it follows from the first limit and the inequality \eqref{eq:rho} that for $\frac{\delta\(\frac{x+y}2\)}{\sqrt t}$ small enough it holds $B\bs C_1\subset H_{xy}$ and $x,y\in C_1$. Then, by Chapman-Kolmogorov equation we have
\begin{align*}
&k_B(t,x,y)\\
&=\int_Bk_B(t/2,x,z)k_B(t/2,z,y)dz\\
&= \int_{H_{xy}}k_{H_{xy}}(t/2,x,z)k_{H_{xy}}(t/2,z,y)dz\\
&\ \ \ +\int_{C_2\bs C_1}\Big(k_B(t/2,x,z)k_B(t/2,z,y)-k_{H_{xy}}(t/2,x,z)k_{H_{xy}}(t/2,z,y)\Big)dz\\
&\ \ \ +\int_{C_1}k_B(t/2,x,z)k_B(t/2,z,y)dz-\int_{C_1\cap H_{xy}}k_{H_{xy}}(t/2,x,z)k_{H_{xy}}(t/2,z,y)dz\\
&\ \ \ +\int_{B\bs (C_1\cup C_2)}k_B(t/2,x,z)k_B(t/2,z,y)dz-\int_{H_{x,y}\bs(C_1\cup C_2)}k_{H_{xy}}(t/2,x,z)k_{H_{xy}}(t/2,z,y)dz\\
&:=k_{H_{xy}}(t,x,y)+I_1+I_2-I_3+I_4-I_5.
\end{align*}
We start with estimating $I_2$ and $I_3$. Directly from the definition of the set $C_1$ and the formula \eqref{eq:kHest} we get 
$$k_{H_{xy}}(t/2,x,z)\lesssim \frac{\delta(x)d}tk(t/2,x,z),\ \ \ \ z\in C_1\cap H_{xy}, \ x\in B.$$
Next, for $z\in C_1$ we have $\delta(z)<2d$. Additionally, since $C_1$ is a segment of a ball cut by a hyperplane, it holds $|x-z|, |y-z|<diam(C_1)\leq 2\sqrt {2d}$, and  hence by \eqref{eq:MS} we obtain for $\delta\(\frac{x+y}2\)/\sqrt t$ small enough
$$k_B(t/2,x,z)\lesssim \frac{\delta(x)d}{t}\(1+\frac{d^2}{t}\)k(t/2,x,z)\approx \frac{\delta(x)d}{t}k(t/2,x,z),\ \ \ \ z,x\in C_1.$$
Clearly, the same inequalities hold for $x$ replaced by $y$. Thus
\begin{align}\nonumber
I_2,I_3&\lesssim \frac{\delta(x)\delta(y)}{t^2}d^2\int_{C_1}k(t/2,x,z)k(t/2,z,y)dz\\\label{eq:finalI23}
&\leq \frac{d^2}{t}\frac{\delta(x)\delta(y)}{t}k(t,x,y)\approx  \(\frac{\delta\(\frac{x+y}{2}\)}{\sqrt t}+\sqrt t\)k_{H_{xy}}(t,x,y).
\end{align}
Furthermore, from \eqref{eq:kHest} and \eqref{eq:MS} we get
$$k_B(t/2,x,z), k_{H_{xy}}(t/2,x,z)\lesssim \frac{\delta(x)}{t^2}k(t/2,x,z),\ \ \ x,z\in B\cap H_{x,y},\ t<1.$$
Using this and Corollary \ref{cor:CKRR}, we obtain
\begin{align}\nonumber
I_4,I_5&\lesssim \frac{\delta(x)\delta(y)}{t^4}\int_{ C_2^c}k(t/2,x,z)k(t/2,z,y)dz\\\nonumber
&\lesssim \frac{e^{-R^2/2t}}{t^3}\frac{\delta(x)\delta(y)}{t}k(t,x,y)\lesssim \frac{e^{-t^{1/4}/4}}{t^3}e^{-t^{1/4}/4}k_{H_{xy}}(t,x,y)\\\label{eq:finalI45}
&\lesssim {e^{-t^{1/4}/4}}k_{H_{xy}}(t,x,y).
\end{align}
We pass to the crucial integral $I_1$. As a first step we will show  that $k_B(t/2,x,z)$ and $k_B(t/2,z,y)$ may be somehow switched to $k_{H_x}(t/2,x,z)$ and $k_{H_y}(t/2,z,y)$, respectively. For this purpose we will employ Lemma \ref{lem:2.1}. Note that assumptions are satisfied since
\begin{align}\label{eq:delta(z)>d/2}
\delta(z)\geq \frac12 d\geq \delta(x),\delta(y),\ \ \ z\in C_2\bs C_1,
\end{align}
for $\delta\(\frac{x+y}2\)/\sqrt t$ small enough. The latter inequality is clear. To explain the first one, let us  denote by $P_{w_0}$ the hyperplane parallel to $P_{xy}$ tangent to $B$ at $w_0\notin H_{xy}$.  By simple calculation we have
\begin{align*}
\delta(z)&=1-\sqrt{(1-\delta_{H_{w_0}}(z))^2+\(|z-w_0|^2-\(\delta_{H_{w_0}}(z)\)^2\)}\\
&=\frac{2\delta_{H_{w_0}}-|z-w_0|^2}{1+\sqrt{(1-\delta_{H_{w_0}}(z))^2+\(|z-w_0|^2-\(\delta_{H_{w_0}}(z)\)^2\)}}\\
&\geq \delta_{H_{w_0}}(z)-\frac12|z-w_0|^2.
\end{align*} 
Since $z\in B\bs C_1$ then $\delta_{w_0}(z)\geq dist(z,P_{xy})\geq d$, so we need to show that $|z-w_0|^2\leq d/2$ for $z\in C_2\bs C_1$. Indeed, by \eqref{eq:parallel} and \eqref{eq:x_0-y_0}, we have  for $z\in C_2$ 
\begin{align*}
|z-w_0|&\leq \left|z-\frac{x+y}{2}\right|+\left|\frac{x+y}{2}-x\right|+\left|x-\frac{x}{|x|}\right|+\left|\frac{x}{|x|}-w_0\right|\\
&\leq  R+\frac12|x-y|+\delta(x)+\left|\frac x{|x|}-\frac{y}{|y|}\right|\leq R+2(\delta(x)+|x-y|+\delta(y))\\
&\leq R+4\sqrt 6\sqrt{\delta\(\frac{x+y}2\)}\leq \sqrt d \(t^{1/4}+4\sqrt 6\sqrt[4]{\frac{\delta\(\frac{x+y}2\)}{\sqrt t}}\),
\end{align*}
where the expression in brackets is smaller than $1$ if $t$ and $\frac{\delta\(\frac{x+y}2\)}{\sqrt t}$ are sufficiently small, as required.
 Furthermore, by simple geometry we have $\delta_{H_x}(z)-\delta_B(z)\leq \delta_{H_x}\(\frac{z}{|z|}\)$ and  $\delta_{H_x}\(\frac{z}{|z|}\)=\frac12\left|\frac{x}{|x|}-\frac{z}{|z|}\right|^2 $. Thus, for $z\in C_2$
\begin{align}\nonumber
\delta_{H_x}(z)-\delta_B(z)&\leq \frac12\left|\frac{x}{|x|}-\frac{z}{|z|}\right|^2\leq \frac12\(\left|\frac{x}{|x|}-z\right|+\delta(z)\)^2\leq 2 \left|\frac{x}{|x|}-z\right|^2\\\nonumber
&\leq 2\(\delta(x)+\left|x-\frac{x+y}{2}\right|+\left|\frac{x+y}{2}-z\right|\)^2\leq 4\(\delta(x)+\frac12|x-y|+R\)^2\\\label{eq:aux13}
&\leq 4\(\frac12|x-y|+2R\)^2,
\end{align} 
where we also used $\delta(x)\leq \delta\(\frac{x+y}2\)\leq R$. Hence, by \eqref{eq:delta(z)>d/2},
\begin{align*}
\frac{\delta_{H_x}(z)-\delta_B(z)}{\delta(z)}&\lesssim \frac{\delta\(\frac{x+y}2\)+R^2}{d}\leq \sqrt{\frac{\delta\(\frac{x+y}2\)}{\sqrt t}}+\sqrt t.
\end{align*}
Similarly, 
\begin{align}\label{eq:|x-z|^2}
\frac{|x-z|^2}{\sqrt t}\leq \frac{\(\frac12|x-y|+R\)^2}{\sqrt t}\lesssim \frac{|x-y|^2}{\sqrt t}+d\lesssim \frac{\delta\(\frac{x+y}2\)}{\sqrt t}+\sqrt t.
\end{align}
Thus, applying Lemma \ref{lem:2.1}, we get
\begin{align*}
|k_B(t/2,x,z)-k_{H_x}(t/2,x,z)|&\lesssim  \frac{\delta(x)\delta(z)}{t}k(t/2,x,z)\(\sqrt t+\sqrt {\frac{\delta\(\frac{x+y}2\)}{\sqrt t}}\),
\end{align*}
where $z\in C_2\bs C_1$ and $\delta(x)/\sqrt t$ is small enough. Furthermore, by \eqref{eq:MS} and \eqref{eq:|x-z|^2},
$$k_B(t/2,x,z)\lesssim \frac{\delta(x)\delta(z)}{t}.$$
Thus, for $z\in C_2\bs C_1$ we have
\begin{align*}
&\left|k_B(t/2,x,z)k_B(t/2,z,y)-k_{H_{x}}(t/2,x,z)k_{H_{y}}(t/2,z,y)\right|\\
&=\left|k_B(t/2,x,z)\(k_B(t/2,z,y)-k_{H_y}(t/2,z,y)\)\right.\\
&\ \ \ \ \ \left.+\(k_B(t/2,x,z)-k_{H_y}(t/2,x,z)\)\([k_{H_y}(t/2,z,y)-k_{B}(t/2,z,y)]+k_{B}(t/2,z,y)\)\right|\\
&\lesssim\(\sqrt t+\sqrt {\frac{\delta\(\frac{x+y}2\)}{\sqrt t}}\)\frac{\delta(x)\delta(y)}{t^2}\(\delta(z)\)^2k(t/2,x,z)k(t/2,z,y).
\end{align*}
By \eqref{eq:delta(z)>d/2} and \eqref{eq:aux12} we   bound $\delta(z)\lesssim \delta_{H_{xy}}(z)$ and consequently, by Proposition \ref{prop:CKHB}, 
\begin{align}\nonumber
&\left|\int_{C_2\bs C_1}k_B(t/2,x,z)k_B(t/2,z,y)-k_{H_{x}}(t/2,x,z)k_{H_{y}}(t/2,z,y)\right|\\\nonumber
&\lesssim \(\sqrt t+\sqrt {\frac{\delta\(\frac{x+y}2\)}{\sqrt t}}\)\frac{\delta(x)\delta(y)}{t^2}\int_{H_{xy}}\(\delta_{H_{xy}}(z)\)^2k(t/2,x,z)k(t/2,z,y)dz\\\label{eq:aux10}
&\approx \(\sqrt t+\sqrt {\frac{\delta\(\frac{x+y}2\)}{\sqrt t}}\)k_{H_{xy}}(t,x,y).
\end{align}
The next step is to stimate the difference between  expressions $k_{H_{x}}(t/2,x,z)k_{H_{y}}(t/2,z,y)$ and $k_{H_{xy}}(t/2,x,z)k_{H_{xy}}(t/2,z,y)$. By \eqref{eq:kHform}, we may write
\begin{align}\nonumber
|k_{H_{xy}}(t/2,x,z)-k_{H_x}(t/2,x,z)|&=k(t/2,x,z)e^{-2\delta(x)\delta_{H_x}(z)/t}\left|1-e^{-2\delta(x)\(\delta_{H_{xy}}(z)-\delta_{H_{x}}(z)\)/t}\right|\\
\label{eq:kHxx-kHx}
&\leq k(t/2,x,z)\left|1-e^{-2\delta(x)\(\delta_{H_{xy}}(z)-\delta_{H_{x}}(z)\)/t}\right|.
\end{align}
Let us denote by $l_z$ the line perpendicular to $H_{xy}$  and containing $z$, and by $w_x=l_z\cap P_x$, $w_{xy}=l_z\cap P_{xy}$ points that are intersections of $l_z$ with $P_x$ and $P_{xy}$, respectively. Then we have 
\begin{align*}
\delta_{H_{xy}}(z)=|z-w_{xy}|, \hspace{20mm}\delta_{H_x}(z),
=\cos\angle(P_{xy},P_x)|z-w_x|
\end{align*}   
and consequently
\begin{align*}
\left|\delta_{H_x}(z)-\delta_{H_{xy}}(z)\right|&=
\left|\(\cos\angle(P_{xy},P_x)-1\)\delta_{H_{xy}}(z)+\cos\angle(P_{xy},P_x)\(|z-w_z|-\delta_{H_{xy}}(z)\)\right|\\
&\leq \tan^2\angle(P_{xy},P_x)+\left||z-w_x|-\delta_{H_{xy}}(z)\right|.
\end{align*}  
 The inequality \eqref{eq:x_0-y_0} gives us
\begin{align}\label{eq:tan}
\tan\angle(P_{xy},P_x)=\left.\frac{d}{dv}\sqrt{1-v^2}\right|_{v=-\frac12\left|\frac{x}{|x|}-\frac{y}{|y|}\right|}\leq \left|\frac{x}{|x|}-\frac{y}{|y|}\right|\leq2\sqrt6\,\sqrt{\delta\(\frac{x+y}{2}\)}.
\end{align} 
Furthermore, since $x/|x|\in P_{x}\cap P_{xy}$ and by estimate of $\left|\frac x{|x|}-z\right|$ in \eqref{eq:aux13}, we get
\begin{align*}
\left||z-w_x|-\delta_{H_{xy}}(z)\right|&=\big||z-w_x|-|z-w_{xy}|\big|\leq |w_x-w_{xy}|\\
&\leq\tan\angle(P_{xy},P_x)\left|\frac{x}{|x|}-w_{xy}\right|\leq\tan\angle(P_{xy},P_x)\left|\frac{x}{|x|}-z\right|\\
&\lesssim\sqrt{\delta\(\frac{x+y}{2}\)}\(\sqrt{\delta\(\frac{x+y}{2}\)}+R\).
\end{align*} 
and therefore,  for $\delta\(\frac{x+y}2\)/\sqrt t$ small enough we have
\begin{align*}
\frac{\delta(x)}{t}\left|\delta_{H_{xy}}(z)-\delta_{H_x}(z)\right|&\lesssim   \frac{\delta(x)}{t}\(\delta\(\frac{x+y}{2}\)+\sqrt{\delta\(\frac{x+y}{2}\)}\(\sqrt{\delta\(\frac{x+y}{2}\)}+R\)\)\\
&\lesssim\frac{\delta(x)}{t}\(\delta\(\frac{x+y}{2}\)+R^2\)\\
&=\frac{\delta(x)}{t^{1/2}}\(\frac{\delta\(\frac{x+y}{2}\)}{t^{1/2}}+t^{1/4}\sqrt{\frac{\delta\(\frac{x+y}{2}\)}{t^{1/2}}}+\sqrt t\)\\
&=\frac{\delta(x)}{t^{1/2}}\(\frac{\delta\(\frac{x+y}{2}\)}{t^{1/2}}+\sqrt t\)<1,
\end{align*}
where we used  $a+ab\leq 2a^2+b^2$, $a,b\geq 0$ in the second inequality.
Applying this to \eqref{eq:kHxx-kHx} and using $\delta(x)\approx \delta_{H_{xy}(x)}$ and  the estimate $|1-e^v|\lesssim v$, $|v|<1$, we obtain
\begin{align*}
|k_{H_{xy}}(t/2,x,z)-k_{H_x}(t/2,x,z)|\lesssim k(t/2,x,z)\frac{\delta_{H_{xy}}(x)}{t^{1/2}}\(\frac{\delta\(\frac{x+y}{2}\)}{t^{1/2}}+\sqrt t\).
\end{align*}
Hence,  using this and \eqref{eq:kHest}, we may write for $\delta\(\frac{x+y}{2}\)/\sqrt t$ small enough
\begin{align*}
&\left|k_{H_x}(t/2,x,z)k_{H_y}(t/2,z,y)-k_{H_{xy}}(t/2,x,z)k_{H_{xy}}(t/2,z,y)\right|\\
&\leq\left|(k_{H_{x}}(t/2,x,z)-k_{H_{xy}}(t/2,x,z))\(k_{H_y}(t/2,z,y)-k_{H_{xy}}(t/2,z,y)\)\right|\\
&\ \ \ +\left|k_{H_{xy}}(t/2,x,z)\(k_{H_y}(t/2,z,y)-k_{H_{xy}}(t/2,z,y)\)\right|\\
&\ \ \ +\left|\(k_{H_x}(t/2,x,z)-k_{H_{xy}}(t/2,x,z)\)k_{H_{xy}}(t/2,z,y)\right|\\
&\lesssim \frac{\delta_{H_{xy}}(x)\delta_{H_{xy}}(y)}{t}k(t/2,x,z)k(t/2,z,y)\(\frac{\delta\(\frac{x+y}{2}\)}{t^{1/2}}+\sqrt t\)\(1+\frac{\delta_{H_{xy}}(z)}{t^{1/2}}
\).
\end{align*}
Thus, applying Proposition \ref{prop:CKHB} with $r=0$ and $\beta=0,\frac12$ as well as the estimate \eqref{eq:kapproxk}, we conclude
\begin{align*}
&\int_{C_2\bs C_1}\left|k_{H_x}(t/2,x,z)k_{H_y}(t/2,z,y)-k_{H_{xy}}(t/2,x,z)k_{H_{xy}}(t/2,z,y)\right|dz\\
&\lesssim \(\frac{\delta\(\frac{x+y}{2}\)}{t^{1/2}}+\sqrt t\)k_{H_{xy}}(t,x,y).
\end{align*}
Combining this with \eqref{eq:aux10} we get
\begin{align}\label{eq:finalI1}I_1\lesssim \(\sqrt{\frac{\delta\(\frac{x+y}{2}\)}{t^{1/2}}}+ \sqrt t\)k_{H_{xy}}(t,x,y).
\end{align}
The assertion of the lemma follows now from \eqref{eq:finalI23}, \eqref{eq:finalI45}, \eqref{eq:finalI1} and \eqref{eq:kapproxk}.  
\end{proof}
Theorem \ref{thm:Thm2} is a direct consequence of Lemmas \ref{lem:2.2} and \ref{lem:kH-ddk}.

\begin{lemma}\label{lem:kH-ddk}
There are constants $c_1, c_2>0$ such that for $t<c_1$ and $\frac{\delta\(\frac{x+y}2\)}{\sqrt t}<c_1$ we have
\begin{align*}
\left|k_{H_{xy}}(t,x,y)-\frac{\delta(x)\delta(y)}tk(t,x,y)\right|\lesssim  \frac{\delta\(\frac{x+y}{2}\)}{\sqrt t}\(\sqrt t+  \(\frac{\delta\(\frac{x+y}{2}\)}{\sqrt t}\)^2\)k_B(t,x,y).
\end{align*}
\end{lemma}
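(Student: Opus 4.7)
The plan is to exploit the explicit half-space formula \eqref{eq:kHform} to write
$$k_{H_{xy}}(t,x,y)=\bigl(1-e^{-u}\bigr)\,k(t,x,y),\qquad u:=\frac{\delta_{H_{xy}}(x)\,\delta_{H_{xy}}(y)}{t},$$
and then to compare $1-e^{-u}$ with $v:=\delta(x)\delta(y)/t$ via two successive linearisations. Decomposing the target as the telescoping sum $(1-e^{-u})-v=\bigl((1-e^{-u})-u\bigr)+(u-v)$, I would control the Taylor error and the geometric error separately.

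For the geometric step I would first verify the identity $\delta_{H_{xy}}(x)=\delta(x)\cos(\alpha/2)$ (and symmetrically for $y$), with $\alpha=\angle(x,y)$. Since $P_{xy}$ passes through $\tfrac{x}{|x|}$ with unit normal parallel to $\tfrac{x}{|x|}+\tfrac{y}{|y|}$, whose inner product with $\tfrac{x}{|x|}$ normalises to $\cos(\alpha/2)$, projecting $x-\tfrac{x}{|x|}=-\delta(x)\tfrac{x}{|x|}$ onto this normal produces the claimed cosine factor. Hence $u=v\cos^2(\alpha/2)$, and
$$|u-v|=v\sin^2(\alpha/2)\lesssim v\,\alpha^2\lesssim v\,\delta\!\left(\tfrac{x+y}{2}\right),$$
the last estimate coming from \eqref{eq:x_0-y_0} via $\alpha^2\le\bigl|\tfrac{x}{|x|}-\tfrac{y}{|y|}\bigr|^2\lesssim\delta(\tfrac{x+y}{2})$.

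For the Taylor step I would apply the elementary inequality $0\le s-(1-e^{-s})\le s^2/2$ (integrate $1-e^{-r}\le r$ on $[0,s]$) to get $|(1-e^{-u})-u|\le u^2/2$. Combining both estimates,
$$\left|k_{H_{xy}}(t,x,y)-\tfrac{\delta(x)\delta(y)}{t}k(t,x,y)\right|\lesssim\bigl(u^2+v\,\delta(\tfrac{x+y}{2})\bigr)\,k(t,x,y).$$

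To recast this majorant in the form stated in the lemma I would use \eqref{eq:MS} and \eqref{eq:htxy:est}: the hypotheses $t<c_1$ and $\delta(\tfrac{x+y}{2})/\sqrt{t}<c_2$, together with \eqref{eq:parallel}, force $\delta(x),\delta(y),|x-y|^2\lesssim\delta(\tfrac{x+y}{2})$, which renders the second summand of $h$ in \eqref{eq:htxy:est} negligible and gives $k_B(t,x,y)\approx v\,k(t,x,y)$. In particular $v\,k\approx k_B$ and $u^2 k\approx v\cdot(v\,k)\approx v\,k_B$, so it remains to check that $v+\delta(\tfrac{x+y}{2})$ is controlled by $\tfrac{\delta(\tfrac{x+y}{2})}{\sqrt{t}}\bigl(\sqrt{t}+(\delta(\tfrac{x+y}{2})/\sqrt{t})^2\bigr)$; the crude bound $v\lesssim(\delta(\tfrac{x+y}{2})/\sqrt{t})^2$ then allows a term-by-term verification.

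The main obstacle will be this final algebraic bookkeeping—ensuring that both the Taylor contribution $v^2$ and the geometric contribution $v\,\delta(\tfrac{x+y}{2})$ are absorbed in the prescribed envelope throughout the small $\delta(\tfrac{x+y}{2})/\sqrt{t}$ regime. The identity $\delta_{H_{xy}}(x)=\delta(x)\cos(\alpha/2)$ also demands some care about the orientation of the normal so that $H_{xy}$ is consistently the half-space containing both $x$ and $y$.
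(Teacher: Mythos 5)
Your decomposition $(1-e^{-u})-v = \bigl((1-e^{-u})-u\bigr) + (u-v)$ with $u=\delta_{H_{xy}}(x)\delta_{H_{xy}}(y)/t$, the geometric identity $\delta_{H_{xy}}(x)=\delta(x)\cos(\alpha/2)$ (hence $u=v\cos^2(\alpha/2)$), and the Taylor bound $|(1-e^{-u})-u|\le u^2/2$ are exactly the paper's ingredients; the paper merely telescopes as $\bigl((1-e^{-u})-(1-e^{-v})\bigr)+\bigl((1-e^{-v})-v\bigr)$, which is the same computation after a harmless reshuffle. (A small remark on your geometric step: $|\tfrac{x}{|x|}-\tfrac{y}{|y|}|=2\sin(\alpha/2)\le\alpha$, so going through $\alpha^2$ has the inequality backwards; but $\sin^2(\alpha/2)=\tfrac14|\tfrac{x}{|x|}-\tfrac{y}{|y|}|^2\lesssim\delta\left(\tfrac{x+y}{2}\right)$ by \eqref{eq:x_0-y_0} gives the estimate you want directly.) Up to the last step the two proofs coincide.

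The final absorption step that you flag as ``the main obstacle'' is, however, a genuine gap, and it cannot be closed as stated. Writing $a=\delta\left(\tfrac{x+y}{2}\right)/\sqrt t$ and $v=\delta(x)\delta(y)/t$, the two telescoping contributions yield
\begin{equation*}
\left|k_{H_{xy}}(t,x,y)-v\,k(t,x,y)\right|\lesssim v\left(\delta\left(\tfrac{x+y}{2}\right)+v\right)k(t,x,y)=v\left(a\sqrt t+v\right)k(t,x,y)\lesssim\left(a\sqrt t+v\right)k_B(t,x,y),
\end{equation*}
using $k_B\approx v\,k$. To match the stated envelope $a(\sqrt t+a^2)k_B=(a\sqrt t+a^3)k_B$ via $v\lesssim a^2$, as you propose, one would need $a^2\lesssim a^3$, i.e.\ $a\gtrsim1$, while the hypothesis forces $a$ small. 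A concrete failure: take $x=y$ with $\delta(x)=t^{3/4}$, so that $a=t^{1/4}$, $v=a^2=\sqrt t$, $H_{xy}=H_x$ and $\delta_{H_{xy}}(x)=\delta(x)$. The left side is $\approx\tfrac12 v^2k=\tfrac12 t\,k$, while the right side is $\approx a(\sqrt t+a^2)v\,k=2t^{5/4}k$; the ratio blows up like $t^{-1/4}$.

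This is not a defect in your reasoning alone: the paper's own argument terminates at the bound $\left(\sqrt t+\left(\delta\left(\tfrac{x+y}{2}\right)/\sqrt t\right)^2\right)k_B$ (its final display, after multiplying by $k$ and invoking \eqref{eq:kapproxk}), and the extra prefactor $\delta\left(\tfrac{x+y}{2}\right)/\sqrt t$ asserted in the statement of Lemma~\ref{lem:kH-ddk} is never produced. That prefactor appears to be spurious. Without it, the bound $\left(\sqrt t+a^2\right)k_B$ is what the argument gives and is all that Theorem~\ref{thm:Thm2} requires: combined with Lemma~\ref{lem:2.2} the total error is $\lesssim\left(\sqrt t+\sqrt a+a^2\right)k_B\lesssim\left(\sqrt t+\sqrt a\right)k_B$ since $a^2\le\sqrt a$ for $a\le1$. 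You should either prove the bound $\left(\sqrt t+a^2\right)k_B$---which your outline does establish once the final absorption step is dropped---or flag explicitly that the envelope as printed cannot be reached.
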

\begin{proof}
Due to the inequalities $\delta_{H_{xy}}(x)\leq \delta(x)$, $\delta_{H_{xy}}(y)\leq \delta(y)$ and $1-e^{-u}\leq u, u\geq0$, we get
\begin{align*}
&\left|\(1-e^{-\delta_{H_{xy}}(x)\delta_{H_{xy}}(y)/t}\)-\(1-e^{-\delta(x)\delta(y)/t}\)\right|\\
&=e^{-\delta_{H_{xy}}(x)\delta_{H_{xy}}(y)/t}\(1-e^{-\big(\delta(x)\delta(y)-\delta_{H_{xy}}(x)\delta_{H_{xy}}(y)\big)/t}\)\\
&\leq \frac1t \big(\delta(x)\delta(y)-\delta_{H_{xy}}(x)\delta_{H_{xy}}(y)\big)\\
&= \frac1t \bigg(\big(\delta(x)-\delta_{H_{xy}}(x)\big)\delta(y)+\delta_{H_{xy}}(x)\big(\delta(y)-\delta_{H_{xy}}(y)\big)\bigg)\\
&\leq \frac1t \bigg(\big(\delta(x)-\delta_{H_{xy}}(x)\big)\delta(y)+\delta(x)\big(\delta(y)-\delta_{H_{xy}}(y)\big)\bigg).
\end{align*}
Since $\delta_{H_{xy}}(x)=\cos\angle(P_{xy},P_x)\delta(x)$ and $\delta_{H_{xy}}(y)=\cos\angle(P_{xy},P_x)\delta(y)$ and by \eqref{eq:tan}, for $\delta\(\frac{x+y}{2}\)/\sqrt t$ small enough we obtain
\begin{align*}
\left|\(1-e^{-\delta_{H_{xy}}(x)\delta_{H_{xy}}(y)/t}\)-\(1-e^{-\delta(x)\delta(y)/t}\)\right|&\leq \(1-\cos\angle(P_{xy},P_x)\)\frac{2\delta(x)\delta(y)}t \\
&\leq\tan^2\angle(P_{xy},P_x) \frac{2\delta(x)\delta(y)}t \\
&\lesssim  \delta\(\frac{x+y}{2}\)\frac{\delta(x)\delta(y)}t,
\end{align*}
and consequently
\begin{align*}
&\left|\(1-e^{-\delta_{H_{xy}}(x)\delta_{H_{xy}}(y)/t}\)-\frac{\delta(x)\delta(y)}{t}\right|\\
&=\left|\(1-e^{-\delta_{H_{xy}}(x)\delta_{H_{xy}}(y)/t}\)-\(1-e^{-\delta(x)\delta(y)/t}\)\right|+\left|\(1-e^{-\delta(x)\delta(y)/t}\)-\frac{\delta(x)\delta(y)}{t}\right|\\
&\lesssim  \frac{\delta(x)\delta(y)}t\(\delta\(\frac{x+y}{2}\)+  \frac{\delta(x)\delta(y)}t\)\lesssim  \frac{\delta(x)\delta(y)}t\(\sqrt t+  \frac{\(\delta\(\frac{x+y}{2}\)\)^2}t\).
\end{align*}
Hence, in view of  \eqref{eq:kHform} and \eqref{eq:kapproxk}, the proof is complete.
\end{proof}
\section{Appendix}
The first proposition and its corollaries deal with Chapman-Kolmogorow identity-related integrals.

\begin{proposition}\label{prop:CKHB}
Let $\beta\geq0$ and $\alpha\in (0,1)$. Then there is a constant $ c_{n,\beta}$ depending only on $n$ and $\beta $ such that  for any half-space $H\subset \R^n$ and  $r\geq 0$ it holds   
\begin{align*}
&\int_{H\bs B((1-\alpha)x+\alpha y ,r)} \kernel(\alpha t,x,z)\kernel((1-\alpha)t,z,y)\(\delta_H(z)\)^\beta dz\\
&\leq c_{n,\beta} \,k(t,x,y)t^{\beta/2}\exp\left(-\frac{r^2}{8\alpha(1-\alpha)t}\right)\(1+\frac{\delta_H(x)+\delta_H(y)}{\sqrt t}\)^\beta.
\end{align*}
\end{proposition}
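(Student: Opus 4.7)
The plan is to reduce the integral to a Gaussian computation centered at $m := (1-\alpha)x + \alpha y$ via the standard Chapman--Kolmogorov-style factorization. Completing the square in the exponent one verifies
$$k(\alpha t, x, z)\,k((1-\alpha)t, z, y) \;=\; k(t,x,y)\,k\bigl(\alpha(1-\alpha)t,\,z,\,m\bigr),$$
so writing $s := \alpha(1-\alpha)t$ and pulling $k(t,x,y)$ outside, the proposition reduces to proving
$$J \;:=\; \int_{H\setminus B(m,r)} k(s,z,m)\,\delta_H(z)^\beta\,dz \;\lesssim\; e^{-r^2/(8s)}\,t^{\beta/2}\Bigl(1+\tfrac{\delta_H(x)+\delta_H(y)}{\sqrt t}\Bigr)^{\!\beta}.$$

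To extract the exponential factor in $r$, for $z$ with $|z-m|\geq r$ I would split the Gaussian as
$$\exp\!\Bigl(-\tfrac{|z-m|^2}{4s}\Bigr) \;\leq\; \exp\!\Bigl(-\tfrac{r^2}{8s}\Bigr)\exp\!\Bigl(-\tfrac{|z-m|^2}{8s}\Bigr),$$
which, up to a factor $2^{n/2}$, replaces $k(s,z,m)$ by $e^{-r^2/(8s)}\,k(2s,z,m)$. I would then enlarge the domain of integration to $\R^n$ (which only weakens the bound) and use that $\delta_H$ is $1$-Lipschitz, giving $\delta_H(z)\leq \delta_H(m)+|z-m|$. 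The remaining integral is then a $\beta$-moment of $\delta_H(m)+|W|$ for a Gaussian $W\sim\mathcal{N}(0,4sI_n)$, which is bounded by a dimensional constant times $\delta_H(m)^\beta + s^{\beta/2}$.

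For the last step I would write $\delta_H(z)=|\ell(z)|$ for an affine $\ell$ of unit gradient; then the ordinary triangle inequality gives
$$\delta_H(m) \;=\; |\ell(m)| \;\leq\; (1-\alpha)|\ell(x)|+\alpha|\ell(y)| \;\leq\; \delta_H(x)+\delta_H(y)$$
for arbitrary $x,y\in\R^n$ (so no restriction $x,y\in H$ is needed). Combining with $s\leq t/4$ and the elementary bound $a^\beta+b^\beta+c^\beta\lesssim (a+b+c)^\beta$ yields the stated estimate, after multiplication by $k(t,x,y)$. There is no serious obstacle in this argument; the only thing to watch is the $\beta$-dependence of the constant in the Gaussian moment bound, and the use of the signed-distance form of $\delta_H$ so that the triangle inequality for $\delta_H(m)$ works uniformly in the position of $x,y$ relative to $H$.
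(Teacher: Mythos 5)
Your proof is correct and takes essentially the same route as the paper: factor the product of Gaussians as $k(t,x,y)\,k(\alpha(1-\alpha)t,z,m)$, split the exponential to extract $e^{-r^2/(8\alpha(1-\alpha)t)}$, and control $\delta_H(z)^\beta$ via the $1$-Lipschitz property and convexity (affine form) of the distance to the bounding hyperplane. The paper just carries this out after first rotating and translating so that $m=0$ and then passing to polar coordinates, whereas you phrase the last step as a Gaussian moment bound; the substance is identical.
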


\begin{proof}
Due to translational and rotational invariance of the heat kernel $k(t,x,y)$, we may assume that $x=(-\alpha|x-y|,0,...,0)$,  $y=((1-\alpha)|x-y|,0,...,0)$. Then we have $B((1-\alpha) x+\alpha y,r)=B(0,r)$ and  
$$\delta_H(z)\leq |z|+\delta_H(x)+\delta_H(y),$$
 as well as
\begin{eqnarray*}
  \kernel(\alpha t,x,z)\kernel((1-\alpha)t,z,y)=k(t,x,y) \frac{\exp\left(-\frac{|z|^2}{4\alpha(1-\alpha)t}\right)}{(4\alpha(1-\alpha)\pi t)^{n/2}}\/.
\end{eqnarray*}
Hence we get
\begin{align*}
&\int_{H-B((1-\alpha)x+\alpha y ,r)} \kernel(\alpha t,x,z)\kernel((1-\alpha)t,z,y)\(\delta_H(z)\)^\beta dz\\
&\leq k(t,x,y)\int_{B(0,r)^c} \frac{\exp\left(-\frac{|z|^2}{4\alpha(1-\alpha)t}\right)}{(4\alpha(1-\alpha)\pi t)^{n/2}}\(|z|+\delta_H(x)+\delta_H(y)\)^\beta dz\\
 &=\frac{n}{\Gamma\(n/2\)} k(t,x,y)\int_r^\infty \frac{\exp\left(-\frac{u^2}{4\alpha(1-\alpha)t}\right)}{ (4\alpha(1-\alpha)t)^{n/2}}\(u+\delta_H(x)+\delta_H(y)\)^\beta u^{n-1}du\\
  &=\frac{n}{\Gamma\(n/2\)} k(t,x,y)t^{\beta/2}\int_{r/\sqrt{4\alpha(1-\alpha) t}}^\infty e^{-v^2}\(\sqrt{4\alpha(1-\alpha)}v+\frac{\delta_H(x)+\delta_H(y)}{\sqrt t}\)^\beta v^{n-1}dv\\
    &\leq \frac{n}{\Gamma\(n/2\)}2^{\beta} k(t,x,y)t^{\beta/2}\exp\left(-\frac{r^2}{8\alpha(1-\alpha)t}\right)\int_{0}^\infty e^{-v^2/2}\(v^{\beta}+\(\frac{\delta_H(x)+\delta_H(y)}{\sqrt t}\)^\beta\) v^{n-1}dv\\
        &\leq c_{n,\beta}  k(t,x,y)t^{\beta/2}\exp\left(-\frac{r^2}{8\alpha(1-\alpha)t}\right)\(1+\frac{\delta_H(x)+\delta_H(y)}{\sqrt t}\)^\beta,
\end{align*}
for some $c_{n,\beta}>0$.

\end{proof}

\begin{corollary}\label{cor:CKHH}
Let $\alpha\in (0,1)$ and $r>0$. Then for any half-space $H\subseteq \R^n$  
\begin{align*}
&\int_{H\bs B((1-\alpha)x+\alpha y ,r)} \kernel_H(\alpha t,x,z)\kernel_H((1-\alpha)t,z,y)dz\lesssim \,k_H(t,x,y)\frac{\exp\left(-\frac{r^2}{8\alpha(1-\alpha)t}\right)}{\alpha(1-\alpha)}.
\end{align*}
\end{corollary}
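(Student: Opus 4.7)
Plan: I would combine Proposition~\ref{prop:CKHB} with the pointwise estimate
$$k_H(s,u,v)\leq \min\Big(1,\tfrac{\delta_H(u)\delta_H(v)}{s}\Big)k(s,u,v),$$
which follows from $1-e^{-r}\leq \min(1,r)$ applied to \eqref{eq:kHform}, together with the two-sided comparison $k_H(t,x,y)\approx \min(1,\delta_H(x)\delta_H(y)/t)\,k(t,x,y)$ from \eqref{eq:kHest}. The scheme is: on each factor of $k_H$ in the integrand I apply the pointwise bound (either the linear version producing $\delta_H(z)/s$, or the trivial $k_H\leq k$), then invoke Proposition~\ref{prop:CKHB} with the appropriate $\beta\in\{0,1,2\}$, and finally recombine using \eqref{eq:kHest}. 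By the symmetry $(x,\alpha)\leftrightarrow(y,1-\alpha)$ of both sides, I may assume $\delta_H(x)\leq\delta_H(y)$, and I distinguish three regimes.

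First, if $\delta_H(x)\delta_H(y)\geq t$, then \eqref{eq:kHest} gives $k_H(t,x,y)\approx k(t,x,y)$; applying $k_H\leq k$ on both factors and Proposition~\ref{prop:CKHB} with $\beta=0$ yields a bound $\lesssim k(t,x,y)\exp(-r^2/8\alpha(1-\alpha)t)\approx k_H(t,x,y)\exp(-r^2/8\alpha(1-\alpha)t)$, and the extra factor $1/(\alpha(1-\alpha))\geq 4$ is acquired for free. Second, if $\delta_H(x)\delta_H(y)<t$ and $\delta_H(y)\leq\sqrt t$, then both distances are at most $\sqrt t$ and so the polynomial factor $(1+(\delta_H(x)+\delta_H(y))/\sqrt t)^2$ in Proposition~\ref{prop:CKHB} with $\beta=2$ is bounded by a constant. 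I linearize both $k_H$ factors, producing $\delta_H(x)\delta_H(y)\delta_H(z)^2/(\alpha(1-\alpha)t^2)$ inside the integral; Proposition~\ref{prop:CKHB} then produces the prefactor $\delta_H(x)\delta_H(y)/(\alpha(1-\alpha)t)$ in front of $k(t,x,y)$, and by \eqref{eq:kHest} this is precisely $k_H(t,x,y)/(\alpha(1-\alpha))$ up to a constant.

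The delicate regime is $\delta_H(x)\delta_H(y)<t$ with $\delta_H(y)>\sqrt t$ (hence $\delta_H(x)<\sqrt t$), since the polynomial factor $1+(\delta_H(x)+\delta_H(y))/\sqrt t\approx \delta_H(y)/\sqrt t$ is now unbounded. Linearizing both factors would yield an uncontrolled prefactor of order $\delta_H(y)^2/t$; instead I linearize only the $x$-factor and keep $k_H\leq k$ on the $y$-factor, then apply Proposition~\ref{prop:CKHB} with $\beta=1$. The polynomial blow-up $\delta_H(y)/\sqrt t$ combines exactly with the linear prefactor $\delta_H(x)/(\alpha\sqrt t)$ to give $\delta_H(x)\delta_H(y)/(\alpha t)\approx k_H(t,x,y)/(\alpha\,k(t,x,y))$, and since $1/\alpha\leq 1/(\alpha(1-\alpha))$ the estimate closes.

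The subtle point driving the entire case split is which factor to linearize in the asymmetric regime: it must be the one associated with the \emph{smaller} distance, so that the polynomial correction from Proposition~\ref{prop:CKHB} is absorbed by the matching linear prefactor and the reconstructed quantity is $\min(1,\delta_H(x)\delta_H(y)/t)\,k(t,x,y)\approx k_H(t,x,y)$, rather than something that grows with $\delta_H(y)$. I expect no other real obstacle beyond this careful matching of $\beta$ to the size of $\delta_H(y)/\sqrt t$.
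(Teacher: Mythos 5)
Your proposal is correct and follows essentially the same route as the paper's proof: both arguments reduce to Proposition~\ref{prop:CKHB} by choosing $\beta\in\{0,1,2\}$ according to how many of the two half-space kernels are linearized, and both recognize that in the asymmetric regime one must linearize the factor attached to the smaller boundary distance so that the resulting polynomial correction $(1+(\delta_H(x)+\delta_H(y))/\sqrt t)^{\beta}$ recombines into $\min(1,\delta_H(x)\delta_H(y)/t)\,k(t,x,y)\approx k_H(t,x,y)$. The only cosmetic difference is that you organize the regimes as a disjoint partition (by the sign of $\delta_H(x)\delta_H(y)-t$ and the size of the larger distance), whereas the paper runs the $\beta=2$ and $\beta=1$ cases to cover all parameters, separately records the trivial $\beta=0$ bound, and closes by taking the minimum; the two bookkeeping schemes are equivalent.
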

\begin{proof}
Without loss of generality we may assume $\delta_H(x)\geq \delta_H(y)$. If $\delta_H(x)/\sqrt t \leq 1$, then \eqref{eq:kHest} and Proposition \ref{prop:CKHB}  give us
\begin{align*}
&\int_{H\bs B((1-\alpha)x+\alpha y ,r)} \kernel_H(\alpha t,x,z)\kernel_H((1-\alpha)t,z,y)dz\\
&\leq \frac{\delta_H(x)\delta_H(y)}{\alpha(1-\alpha)t^2}\int_{H\bs B((1-\alpha)x+\alpha y ,r)} \kernel(\alpha t,x,z)\kernel((1-\alpha)t,z,y)\(\delta_H(z)\)^2dz\\
&\lesssim \frac{\delta_H(x)\delta_H(y)}{\alpha(1-\alpha)t^2}t\(1+\frac{\delta_H(x)}{\sqrt t}\)^2k(t,x,y)\exp\left(-\frac{r^2}{8\alpha(1-\alpha)t}\right)\\
&\approx \frac{\delta_H(x)\delta_H(y)}{t}k(t,x,y)\frac{\exp\left(-\frac{r^2}{8\alpha(1-\alpha)t}\right)}{\alpha(1-\alpha)}.
\end{align*}
In case when $\delta_H(x)/\sqrt \geq 1$ we proceed similarly, but estimate $k_H(t,x,z)$ just by $k(t,x,y)$, and obtain the same bound:
\begin{align*}
&\int_{H\bs B((1-\alpha)x+\alpha y ,r)} \kernel_H(\alpha t,x,z)\kernel_H((1-\alpha)t,z,y)dz\\
&\leq \frac{\delta_H(y)}{(1-\alpha)t}\int_{H\bs B((1-\alpha)x+\alpha y ,r)} \kernel(\alpha t,x,z)\kernel((1-\alpha)t,z,y)\delta_H(z)dz\\
&\lesssim \frac{\delta_H(y)}{(1-\alpha)t}\sqrt t\(1+\frac{\delta_H(x)}{\sqrt t}\)k(t,x,y)\exp\left(-\frac{r^2}{8\alpha(1-\alpha)t}\right)\\
&\approx \frac{\delta_H(x)\delta_H(y)}{t}k(t,x,y)\frac{\exp\left(-\frac{r^2}{8\alpha(1-\alpha)t}\right)}{\alpha(1-\alpha)}.
\end{align*}
Finally, estimating  $k_H(t,x,z)$ and $k_H(t,z,y)$ by $k(t,x,z)$ and $k(t,z,y)$, respectively, we get
\begin{align*}
&\int_{H\bs B((1-\alpha)x+\alpha y ,r)} \kernel_H(\alpha t,x,z)\kernel_H((1-\alpha)t,z,y)dz\\
&\leq \int_{H\bs B((1-\alpha)x+\alpha y ,r)} \kernel(\alpha t,x,z)\kernel((1-\alpha)t,z,y)dz\\
&\lesssim k(t,x,y)\exp\left(-\frac{r^2}{8\alpha(1-\alpha)t}\right).
\end{align*}
Combining both of the  bounds we arrive at
\begin{align*}
&\int_{H\bs B((1-\alpha)x+\alpha y ,r)} \kernel_H(\alpha t,x,z)\kernel_H((1-\alpha)t,z,y)dz\\
&\lesssim \(1\wedge \frac{\delta_H(x)\delta_H(y)}{t}\)k(t,x,y)\frac{\exp\left(-\frac{r^2}{8\alpha(1-\alpha)t}\right)}{\alpha(1-\alpha)}\\
&\approx k_H(t,x,y)\frac{\exp\left(-\frac{r^2}{8\alpha(1-\alpha)t}\right)}{\alpha(1-\alpha)},
\end{align*}
which ends the proof.
\end{proof}
Considering $H=\R^n$ one can obtain a slightly better bound than the one in Corollary \ref{cor:CKHH}. Namely, since the constant $c_{n,\beta}$ in Proposition \ref{prop:CKHB} does not depend on a set, taking $\beta=0$  and approaching $\R^n$ by increasing sequence of half-spaces, we obtain
\begin{corollary}\label{cor:CKRR}
Let $\alpha\in (0,1)$ and $r>0$. Then we have
\begin{align*}
&\int_{B((1-\alpha)x+\alpha y ,r)^c} \kernel(\alpha t,x,z)\kernel((1-\alpha)t,z,y)\lesssim \,k(t,x,y)\exp\left(-\frac{r^2}{8\alpha(1-\alpha)t}\right).
\end{align*}
\end{corollary}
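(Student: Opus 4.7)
The plan is to exploit translational and rotational invariance of the Gaussian kernel to recenter the problem at the point $x_0 := (1-\alpha)x+\alpha y$, and then reduce the estimate to a one-dimensional radial Gaussian tail bound. By invariance I would assume $x_0 = 0$, $x = (-\alpha|x-y|,0,\dots,0)$ and $y = ((1-\alpha)|x-y|,0,\dots,0)$, so that the ball $B((1-\alpha)x+\alpha y,r)$ becomes $B(0,r)$ and the integration region is a subset of $\{|z|\ge r\}$.

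The key algebraic step is the product-of-Gaussians identity
\begin{align*}
k(\alpha t,x,z)\,k((1-\alpha)t,z,y)
= k(t,x,y)\,\frac{\exp\!\left(-\frac{|z|^2}{4\alpha(1-\alpha)t}\right)}{(4\pi\alpha(1-\alpha)t)^{n/2}},
\end{align*}
which factors out $k(t,x,y)$ and leaves a pure isotropic Gaussian in $z$ centered at $0$. Next, I would bound $\delta_H(z)$ using that $\delta_H$ is affine on segments lying in $H$ and $1$-Lipschitz, giving $\delta_H(z) \le \delta_H(0) + |z| = (1-\alpha)\delta_H(x)+\alpha\delta_H(y) + |z| \le \delta_H(x)+\delta_H(y)+|z|$. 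In particular,
\begin{align*}
\bigl(\delta_H(z)\bigr)^\beta \le 2^\beta\Bigl(|z|^\beta + (\delta_H(x)+\delta_H(y))^\beta\Bigr).
\end{align*}

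After these reductions the integral is dominated by a one-dimensional integral in $u=|z|$ over $[r,\infty)$, and I would rescale $v = u/\sqrt{4\alpha(1-\alpha)t}$ to get a universal Gaussian. To pull out the exponential factor $\exp(-r^2/8\alpha(1-\alpha)t)$ I would split $e^{-u^2/4\alpha(1-\alpha)t} = e^{-u^2/8\alpha(1-\alpha)t}\,e^{-u^2/8\alpha(1-\alpha)t}$ and use $e^{-u^2/8\alpha(1-\alpha)t} \le e^{-r^2/8\alpha(1-\alpha)t}$ on $\{u\ge r\}$, keeping the other half to control the polynomial growth from $(u+\delta_H(x)+\delta_H(y))^\beta u^{n-1}$. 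The remaining integral $\int_0^\infty e^{-v^2/2}(v^\beta + ((\delta_H(x)+\delta_H(y))/\sqrt t)^\beta)v^{n-1}\,dv$ is finite and depends only on $n,\beta$.

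The only slightly delicate point is that the natural scaling produces $[\alpha(1-\alpha)t]^{\beta/2}$ rather than $t^{\beta/2}$, but this is harmless: since $\alpha(1-\alpha)\le 1/4$, we have $[\alpha(1-\alpha)t]^{\beta/2}\le t^{\beta/2}$, so the bound has the required form with a constant $c_{n,\beta}$ independent of $\alpha$, $H$, $x$, $y$, $r$, $t$. I expect no real obstacle: the argument is a clean computation once the rotation/translation normalization and the Gaussian product identity are in place; care is only needed to make sure the constants absorb the $\alpha(1-\alpha)$ factors correctly so that the final bound is uniform in $\alpha$.
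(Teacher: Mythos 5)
Your proposal is correct and follows essentially the same route as the paper: the paper obtains this corollary by invoking Proposition \ref{prop:CKHB} with $\beta=0$ (exhausting $\R^n$ by half-spaces), and the proof of that proposition is exactly the computation you describe — recentering at $(1-\alpha)x+\alpha y$, the Gaussian product identity factoring out $k(t,x,y)$, and splitting the exponential on $\{|z|\ge r\}$ to extract $\exp\left(-\frac{r^2}{8\alpha(1-\alpha)t}\right)$. Your discussion of $\delta_H(z)^\beta$ and the $[\alpha(1-\alpha)t]^{\beta/2}$ scaling is superfluous here since the corollary corresponds to $\beta=0$, but it introduces no error.
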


\vspace{5mm}
In the ext proposition we estimate the integral 
$$I_{\alpha,\beta}(t,a,b):=\int_0^t \frac1{s^\alpha(t-s)^\beta}\exp\({-\frac{a^2}{s}-\frac{b^2}{t-s}}\)ds,$$
which plays a crucial role  in the proof of Lemma \ref{lem:2.1}. This kind of integrals appear often when applying strong Markov property for Brownian motion. Additionally, it represents, up to a multiplicative constant, the density of a convolution of two inverse-gamma distribution.
\begin{proposition}\label{prop:estints}
Fix $\alpha, \beta>\frac32$. For $a,b,t>0$  we have
\begin{align*}
I_{\alpha,\beta}(t,a,b)\approx e^{-\frac{(a+b)^2}{t}}\(\frac{\(\frac t{a^2}\)^{\alpha-1}+\(\frac t{b^2}\)^{\beta-1}}{t^{\alpha+\beta-1}}+\frac{(a+b)^{\alpha+\beta-2}}{t^{\alpha+\beta-1}}\frac{\sqrt t}{a^{\alpha-1}b^{\beta-1}\sqrt{t+ab}}\),
\end{align*}
where the constants in estimates depend only on $\alpha$ and $\beta$.
\end{proposition}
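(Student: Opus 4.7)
My plan is to normalise the integral by scaling out $t$, then apply Laplace's method around the saddle point of the exponent, separately accounting for boundary contributions.

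First, I substitute $s=tu$ to rewrite
\begin{equation*}
I_{\alpha,\beta}(t,a,b) = t^{1-\alpha-\beta}\,J(A,B),\qquad J(A,B) := \int_0^1 \frac{e^{-A/u - B/(1-u)}}{u^\alpha(1-u)^\beta}\,du,
\end{equation*}
with $A=a^2/t$ and $B=b^2/t$. A direct algebraic check shows the claimed estimate for $I_{\alpha,\beta}$ is equivalent to the dimension-free comparison
\begin{equation*}
J(A,B) \approx e^{-(\sqrt A+\sqrt B)^2}\left[A^{1-\alpha} + B^{1-\beta} + \frac{(\sqrt A+\sqrt B)^{\alpha+\beta-2}}{A^{(\alpha-1)/2} B^{(\beta-1)/2}\sqrt{1+\sqrt{AB}}}\right],
\end{equation*}
so it suffices to establish this two-sided estimate for $J$.

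The phase $\varphi(u):=A/u+B/(1-u)$ is strictly convex on $(0,1)$, with unique minimiser $u^* = \sqrt A/(\sqrt A+\sqrt B)$, minimum value $\varphi(u^*) = (\sqrt A+\sqrt B)^2$, and $\varphi''(u^*) = 2(\sqrt A+\sqrt B)^4/\sqrt{AB}$. A short calculation yields the exact factorisation
\begin{equation*}
\varphi(u) - \varphi(u^*) = \frac{\bigl(\sqrt A\,(1-u) - \sqrt B\,u\bigr)^2}{u(1-u)} \geq 0,
\end{equation*}
which lets me pull the factor $e^{-\varphi(u^*)}$ out of $J(A,B)$ and work with a nonnegative remainder in the exponent. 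The natural Gaussian half-width at the saddle is $\sigma:=(AB)^{1/4}/(\sqrt A+\sqrt B)^2$, and the three summands in the claimed bound correspond respectively to boundary contributions near $u=0$, near $u=1$, and the Laplace contribution in a window of width $\sigma$ around $u^*$.

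For the upper bound I would partition $(0,1)$ into a Laplace window $L:=\{u:|u-u^*|\leq c\min(u^*,1-u^*,\sigma)\}$ for a small constant $c>0$, and its left and right complements $L_-, L_+$. On $L$ the exponent in the remainder stays bounded and $u^{-\alpha}(1-u)^{-\beta}\approx (u^*)^{-\alpha}(1-u^*)^{-\beta}$, so the contribution is at most $(u^*)^{-\alpha}(1-u^*)^{-\beta}\cdot|L|$, which after simplification is exactly the third (Laplace) term. On $L_-$ I bound $(1-u)^{-\beta}\lesssim 1$ together with a Gaussian penalty coming from the factorisation above, then change variable $r=A/u$ to reduce the bound to an incomplete gamma tail $\lesssim A^{1-\alpha}$; the symmetric argument on $L_+$ yields $B^{1-\beta}$. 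For the lower bound I would, in each regime, restrict integration to a suitable sub-interval witnessing one of the three summands: a window of length $\min(u^*,1-u^*,\sigma)$ centred at $u^*$ for the Laplace term, and a small interval near $u=0$ (respectively near $u=1$) on which $u^{-\alpha}e^{-A/u}$ (respectively $(1-u)^{-\beta}e^{-B/(1-u)}$) attains its optimal order and reproduces the corresponding boundary term.

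The principal obstacle is the case analysis depending on the ordering of $\sigma$ with respect to $u^*$ and $1-u^*$. When $\sqrt{AB}\gtrsim 1$ the Gaussian window is narrower than both $u^*$ and $1-u^*$ and Laplace's method cleanly dominates. When $\sqrt{AB}\ll 1$ the Laplace window engulfs one or both endpoints and the three contributions merge; one must then verify that the formula still yields a sharp two-sided estimate, with the factor $(1+\sqrt{AB})^{-1/2}$ in the third term precisely interpolating between the regimes and with the boundary terms $A^{1-\alpha}$ and $B^{1-\beta}$ taking over as $\sqrt{AB}\to 0$.
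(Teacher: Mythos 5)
Your outline is in substance the paper's approach: both complete the square via the identity $-\frac{a^2}{s}-\frac{b^2}{t-s}=-\frac{(a+b)^2}{t}\bigl(1+\frac{(s-\frac{a}{a+b}t)^2}{s(t-s)}\bigr)$ (your $\varphi(u)-\varphi(u^*)$ factorisation is exactly this in the $u=s/t$ variables), identify the saddle $s^*=at/(a+b)$, and split the estimate into a Laplace window around the saddle plus two endpoint tails. The paper organises the pieces differently: it first cuts the $s$-integral at $s^*$, substitutes each half onto $(0,1)$ so that the saddle sits at the endpoint $u=0$, and then splits each half once at $u=1/2$. This keeps each sub-integral's role fixed ($J^{(1)}$ is always the near-saddle piece, $J^{(2)}$ always the endpoint tail), and trades your case analysis on the ordering of $\sigma,\,u^*,\,1-u^*$ for a single case on $b/a$ inside $J^{(1)}$ and a final case on $a(a+b)/t$ when recombining \eqref{eq:J^1} and \eqref{eq:J^2}. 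Your normalisation to the dimension-free $J(A,B)$ is a legitimate and arguably cleaner bookkeeping choice, and your observation that $(1+\sqrt{AB})^{-1/2}$ interpolates between regimes is correct.

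That said, what you have submitted is a plan, not a proof, and the gap it leaves open is precisely where the proposition's content lies. You declare the case analysis "the principal obstacle" and then stop: you never verify that when $\sqrt{AB}\ll 1$ and, say, $A\le B$, the window bound $(u^*)^{1-\alpha}(1-u^*)^{-\beta}$ is absorbed by $A^{1-\alpha}$ (it is, by a factor $(\sqrt{AB})^{\alpha-1}$, but this must be checked); your claim $(1-u)^{-\beta}\lesssim 1$ on $L_-$ silently assumes a WLOG $A\le B$ or should be $(1-u^*)^{-\beta}$; and the lower bound needs not just the existence of witnessing sub-intervals but an actual quantitative match, which the paper obtains by proving two-sided estimates \eqref{eq:J^1}--\eqref{eq:J^2} with explicit constants (taking $c_1=c_2=2$, $c_3=2$ for the reverse inequalities). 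Until the regime-by-regime verification is written out, this remains an outline of the right idea rather than a proof.
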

\begin{proof}
Throughout this proof constants in notations $\approx$ and $\lesssim $ depend only on $\alpha$ and $\beta$. By the equality
$$-\frac{a^2}{s}-\frac{b^2}{t-s}=-\frac{(a+b)^2}{t}\(1+\frac{\(s-\frac{a}{a+b}t\)^2}{s(t-s)}\),$$
we get
\begin{align*}
I_{\alpha,\beta}(t,a,b)&=e^{-(a+b)^2/t}\int_0^t\frac1{s^\alpha(t-s)^\beta}\exp\({-\frac{(a+b)^2\(s-\frac{a}{a+b}t\)^2}{st(t-s)}}\)ds\\
&=e^{-(a+b)^2/t}\int_0^{\frac{a}{a+b}t}+\int_{\frac{a}{a+b}t}^t...ds.
\end{align*}
Substituting $s=\frac{a}{a+b}t\(1-u\)$ and $s=u\frac b{a+b}t+\frac{a}{a+b}t$, respectively, we obtain
\begin{align*}
I_{\alpha,\beta}(t,a,b)&=e^{-(a+b)^2/t}\(\frac{a+b}{at}\)^{\alpha+\beta-1}\int_0^1\frac1{(1-u)^\alpha\(u+\frac{b}{a}\)^\beta}\exp\({-\frac{(a+b)^2u^2}{(1-u)t\(u+\frac ba\)}}\)du\\
&\ \ \ +e^{-(a+b)^2/t}\(\frac{a+b}{bt}\)^{\alpha+\beta-1}\int_0^1\frac1{\(u+\frac{a}{b}\)^\alpha(1-u)^\beta}\exp\({-\frac{(a+b)^2u^2}{(1-u)t\(u+\frac ab\)}}\)du.
\end{align*}
This let us write
\begin{align}\label{eq:I=J+J}
I_{\alpha,\beta}(t,a,b)=e^{-(a+b)^2/t}\(\frac{a+b}{t}\)^{\alpha+\beta-1}\(a^{-\alpha-\beta+1}J_{\alpha,\beta}(t,a,b)+b^{-\alpha-\beta+1}J_{\beta,\alpha}(t,b,a)\),
\end{align}
where
\begin{align*}
J_{\alpha,\beta}(t,a,b)&=\int_0^1\frac1{(1-u)^\alpha\(u+\frac{b}{a}\)^\beta}\exp\({-\frac{(a+b)^2u^2}{(1-u)t\(u+\frac ba\)}}\)du\\
&=\int_0^{1/2}+\int_{1/2}^1...du:=J^{(1)}_{\alpha,\beta}(t,a,b)+J^{(2)}_{\alpha,\beta}(t,a,b).
\end{align*}
For $b/a>1/4$ we have
\begin{align*}
J^{(1)}_{\alpha,\beta}(t,a,b)&\lesssim \int_0^{1/2}\frac1{\(\frac{b}{a}\)^\beta}\exp\({-c_1\frac{(a+b)^2u^2}{t\(\frac ba\)}}\)du\\
&=\(\frac ab\)^{\beta}\frac{\sqrt{t\frac{b}{a}}}{a+b}\int_0^{(a+b)/2\sqrt{tb/a}}e^{-c_1r^2}dr\\
&\approx \(\frac ab\)^{\beta}\frac{\sqrt{t\frac{b}{a}}}{a+b}\(1\wedge \frac{a+b}{\sqrt{t\frac{b}{a}}}\)=\(\frac ab\)^{\beta}\(\frac{\sqrt{t\frac{b}{a}}}{a+b}\wedge 1\)\\
&\approx \(\frac ab\)^{\beta}\({\sqrt{\frac{t}{ab}}}\wedge 1\)\approx \(\frac ab\)^{\beta}\sqrt{\frac{t}{t+ab}}\\
&\approx \(\frac{a}{b}\)^{\beta-1}\frac{a}{a+b}\sqrt{\frac{t}{ab+t}},
\end{align*}
where $c_1=\frac13$. For $b/a\leq1/4$ we get
\begin{align*}
J^{(1)}_{\alpha,\beta}(t,a,b)
&\lesssim \int_0^{b/a}\frac1{\(\frac{b}{a}\)^\beta}\exp\({-c_2\frac{(a+b)^2u^2}{t\(\frac ba\)}}\)du+\int_{b/a}^{1/2}\frac1{u^\beta}\exp\({-c_2\frac{(a+b)^2u}{t}}\)du,\end{align*}
where $c_2=\frac12$. Substituting $u=\sqrt{tb}/(a+b)\sqrt a$, we estimate the first integral by
\begin{align*}
\(\frac ab\)^{\beta}\frac{\sqrt{t\frac{b}{a}}}{a+b}\int_0^{\frac ba (a+b)/2\sqrt{tb/a}}e^{-c_2r^2}dr\approx\(\frac ab\)^{\beta}\frac{\sqrt{t\frac{b}{a}}}{a+b}\(1\wedge \frac ba\frac{a+b}{\sqrt{t\frac{b}{a}}}\)\approx\(\frac ab\)^{\beta-1}\( \sqrt{\frac t{ab}}\wedge 1\).
\end{align*}
Furthermore,
\begin{align*}
\int_{b/a}^{1/2}\frac1{u^\beta}\exp\({-c_2\frac{(a+b)^2u}{t}}\)du&\leq \exp\({-c_2\frac{(a+b)^2\frac ba}{t}}\)\int_{b/a}^{\infty}\frac1{u^\beta}du\leq \(\frac ab\)^{\beta-1} \frac{e^{-c_2ab/t}}{\beta-1},
\end{align*}
which is dominated by the first integral, and therefore we have
\begin{align*}
J^{(1)}_{\alpha,\beta}(t,a,b)\lesssim \(\frac ab\)^{\beta-1}\( \sqrt{\frac t{ab}}\wedge 1\)\approx \(\frac{a}{b}\)^{\beta-1}\frac{a}{a+b}\sqrt{\frac{t}{ab+t}}.
\end{align*}
 On the other hand, taking $c_1=c_2=2$, we obtain opposite inequalities, and hence for any value of $\frac ba$ it holds
\begin{align}\label{eq:J^1}J^{(1)}_{\alpha,\beta}(t,a,b)\approx \(\frac{a}{b}\)^{\beta-1}\frac{a}{a+b}\sqrt{\frac{t}{ab+t}}.\end{align}
Let us now estimate the integral $J^{(2)}_{\alpha,\beta}(t,a,b)$. Since $\alpha\geq \frac32>1$, we have
\begin{align*}
J^{(2)}_{\alpha,\beta}(t,a,b)&\lesssim \int_{1/2}^1\frac1{(1-u)^\alpha\(1+\frac{b}{a}\)^\beta}\exp\({-c_3\frac{(a+b)^2}{(1-u)t\(1+\frac ba\)}}\)du\\
&= \frac{t^{\alpha-1}}{a^{\alpha-\beta-1}(a+b)^{\alpha+\beta-1}}\int_{a(a+b)/t}^{\infty}r^{\alpha-2}e^{-c_3r}dr\\
&\approx \frac{t^{\alpha-1}}{a^{\alpha-\beta-1}(a+b)^{\alpha+\beta-1}}\(1+\frac{a(a+b)}{t}\)^{\alpha-2}e^{-c_3a(a+b)/t},
\end{align*}
where $c_3=\frac14$. If we teke $c_3=2$, we get an  opposite inequality, and hence
\begin{align}\label{eq:J^2}
\frac{t^{\alpha-1}}{a^{\alpha-\beta-1}(a+b)^{\alpha+\beta-1}}&\(1+\frac{a(a+b)}{t}\)^{\alpha-2}e^{-2a(a+b)/t}
\lesssim J^{(2)}_{\alpha,\beta}(t,a,b)\\\nonumber
&\lesssim \frac{t^{\alpha-1}}{a^{\alpha-\beta-1}(a+b)^{\alpha+\beta-1}}\(1+\frac{a(a+b)}{t}\)^{\alpha-2}e^{-a(a+b)/4t}.
\end{align}
If $\frac{a(a+b)}{t}\leq1$, \eqref{eq:J^1} and  \eqref{eq:J^2} give us
\begin{align}\label{eq:aux5}
J_{\alpha,\beta}(t,a,b)\approx \(\frac{a}{b}\)^{\beta-1}\frac{a}{a+b}\sqrt{\frac t{{ab}+t}}+\frac{t^{\alpha-1}}{a^{\alpha-\beta-1}(a+b)^{\alpha+\beta-1}}.
\end{align}
If $\frac{a(a+b)}{t}>1$, then
\begin{align*}
J^{(2)}_{\alpha,\beta}(t,a,b)&\lesssim \(\frac{a}{a+b}\)^\beta e^{-a(a+b)/8t} \(\(\frac{t}{a(a+b)}\)^{\alpha-1}\(1+\frac{a(a+b)}{t}\)^{\alpha-2}e^{-a(a+b)/8t}\)\\
&\lesssim \(\frac{a}{b}\)^{\beta-1}\frac{a}{a+b}\sqrt{\frac1{\frac{ab}{t}+1}}\lesssim J^{(1)}_{\alpha,\beta}(t,a,b),
\end{align*}
and consequently, using inequality $\frac {ab}t+1\leq 2a(a+b)$ and the assumption $\alpha\geq\frac32$,
\begin{align*}
J_{\alpha,\beta}(t,a,b)&\approx J^{(1)}_{\alpha,\beta}(t,a,b)\approx \(\frac{a}{b}\)^{\beta-1}\frac{a}{a+b}\sqrt{\frac1{\frac{ab}{t}+1}}\\
&\approx \(\frac{a}{b}\)^{\beta-1}\frac{a}{a+b}\sqrt{\frac1{\frac{ab}{t}+1}}\(1+\(\frac{b}{a+b}\)^{\beta-1}\(\frac t{a(a+b)}\)^{\alpha-1}\sqrt{\frac{ab}{t}+1}\)\\
&\approx \(\frac{a}{b}\)^{\beta-1}\frac{a}{a+b}\sqrt{\frac t{{ab}+t}}+\frac{t^{\alpha-1}}{a^{\alpha-\beta-1}(a+b)^{\alpha+\beta-1}},
\end{align*}
which is the same bound as in \eqref{eq:aux5}. Applying this to \eqref{eq:I=J+J} we obtain the desired estimate of $I_{\alpha,\beta}(a,v)$.

\end{proof}
The last result of this section is used in the paper for approximation of factors that come from the from of the heat kernel of a half-space \eqref{eq:kHform}.
\begin{proposition}
Let $c_1>0$ be a fixed constant. There exist another constant $c_0>0$ such that for every  $u,v>0$ satisfying $\frac{u}{v}>c_1$ we have
\begin{align}\label{eq:1-e}
\left|\frac{1-e^{-u}}{1-e^{-v}}-1\right|\leq c_0\frac{|u-v|}{v}.
\end{align}
\end{proposition}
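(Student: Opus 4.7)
The plan is to rewrite the expression on the left-hand side as a single quotient and then apply the mean value theorem to the numerator. Specifically, I would observe that
\[
\frac{1-e^{-u}}{1-e^{-v}}-1 = \frac{e^{-v}-e^{-u}}{1-e^{-v}},
\]
so by the mean value theorem applied to $s\mapsto e^{-s}$ there exists $\xi$ between $u$ and $v$ such that $|e^{-v}-e^{-u}| = e^{-\xi}|u-v|$. Hence
\[
\left|\frac{1-e^{-u}}{1-e^{-v}}-1\right| = \frac{e^{-\xi}|u-v|}{1-e^{-v}} = \frac{v\, e^{-\xi}}{1-e^{-v}}\cdot \frac{|u-v|}{v}.
\]

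Next I would use the assumption $u/v > c_1$ to get a lower bound on $\xi$ in terms of $v$. Since $\xi\geq\min(u,v)\geq \min(c_1,1)\, v =: c\, v$ with $c=c(c_1)>0$, we have $e^{-\xi}\le e^{-cv}$, and therefore
\[
\left|\frac{1-e^{-u}}{1-e^{-v}}-1\right| \leq \frac{v\, e^{-cv}}{1-e^{-v}}\cdot \frac{|u-v|}{v}.
\]

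It remains to check that the function $g(v):= v\,e^{-cv}/(1-e^{-v})$ is bounded on $(0,\infty)$. This is essentially routine: $g$ is continuous on $(0,\infty)$, and a short Taylor/limit computation gives $\lim_{v\to 0^+}g(v)=1$ and $\lim_{v\to\infty}g(v)=0$, so $g$ attains a finite supremum $c_0=c_0(c_1)$ on $(0,\infty)$. I do not foresee any serious obstacle — the only mild subtlety is handling both orderings $u\leq v$ and $u>v$ uniformly, which is exactly what the uniform lower bound $\xi\geq \min(c_1,1)\, v$ accomplishes, so there is no need to split cases.
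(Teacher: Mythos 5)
Your proof is correct and follows essentially the same strategy as the paper: rewrite the left-hand side as a single quotient, bound the numerator by $|u-v|$ times a decaying exponential, use the hypothesis $u/v>c_1$ to make the exponential decay uniform in $v$, and finish by checking that a single explicit function of $v$ is bounded on $(0,\infty)$. The only cosmetic difference is that you obtain the factor $e^{-\xi}$ with $\xi\ge\min(1,c_1)v$ via the mean value theorem, whereas the paper arrives at $e^{-v}+e^{-u}\le e^{-v}+e^{-c_1 v}$ via the elementary bounds $|1-e^w|\le e(1\wedge|w|)(1+e^w)$ and $1-e^{-w}\ge(1-e^{-1})(1\wedge w)$; both routes reduce to the same kind of bounded supremum.
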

\begin{proof}
First, let us recall two simple bound:
\begin{align*}
\begin{array}{rll}
|1-e^w|&\leq \ \ e(1\wedge |w|)(1+e^w), &w\in \R,\\
1-e^w&\geq \ \ (1-e^{-1})(1\wedge w),&w\geq0. 
\end{array}
\end{align*}
Using them, we get
\begin{align*}
\left|\frac{1-e^{-u}}{1-e^{-v}}-1\right|&=\left|e^{-v}\frac{1-e^{v-u}}{1-e^{-v}}\right|
\leq e\left|e^{-v}\frac{|v-u|(1+e^{v-u})}{1\wedge v}\right|
= e\left|(e^{-v}+e^{-u})\frac{|v-u|}{1\wedge v}\right|\\
&\leq \frac{|v-u|}{v}e\sup_{w\in(0,\infty)}\left\{\(e^{-v}+e^{-c_1v}\)\frac{v}{1\wedge v}\right\},
\end{align*}
as required.
\end{proof}

\end{document}